\newtheorem{theorem}{Theorem}[section]
\newtheorem{proposition}{Proposition}
\newtheorem{remark}{Remark}[section]
\begin{document}
\title[maintitle = The Beta-Gompertz Distribution,
       secondtitle = La Distribución Beta - Gompertz,
       shorttitle = The Beta-Gompertz Distribution
]
\begin{authors}
\author[firstname = Ali Akbar,
        surname = Jafari,
        numberinstitution = 1,
        affiliation = Corresponding,
        email = aajafari@yazd.ac.ir]
\author[firstname = Saeid,
        surname = Tahmasebi,
        numberinstitution = 2,
        email = tahmasebi@pgu.ac.ir
        ]
\author[firstname = Morad,
        surname = Alizadeh,
        numberinstitution = 3,
        ]
\end{authors}
\begin{institutions}
     \institute[subdivision = Department of Statistics,
                institution = Yazd University,
                city = Yazd,
                country = Iran]
     \institute[subdivision = Department of Statistics,
                institution = Persian Gulf University,
                city = Bushehr,
                country = Iran]
     \institute[subdivision = Department of Statistics,
                institution = Ferdowsi  University of Mashhad,
                city = Mashhad,
                country = Iran]

\end{institutions}
\begin{mainabstract}
In this paper, we introduce  a new four-parameter generalized version of the Gompertz  model which is called Beta-Gompertz (BG) distribution.
It includes some well-known lifetime distributions such as beta-exponential and generalized Gompertz distributions  as special sub-models. This new distribution is quite flexible and can be used effectively in modeling survival data and reliability problems. It can have a decreasing, increasing,  and bathtub-shaped failure rate function depending on its parameters.  Some mathematical properties of the new distribution, such as closed-form expressions for the density, cumulative distribution, hazard rate function, the $k$th order moment, moment generating function, Shannon entropy, and the quantile measure  are provided. We discuss maximum likelihood estimation of the BG parameters from one observed sample and derive the observed Fisher's information matrix. A simulation study is performed in order to investigate this proposed estimator for parameters. At the end, in order to show the BG distribution flexibility, an application using a real data set is presented.
\keywords{Beta generator; Gompertz distribution; Maximum likelihood estimation}
\end{mainabstract}
\begin{secondaryabstract}
En este artículo, se introduce una versión generalizada en cuatro parámetros de la distribución de Gompertz denominada como la distribución Beta-Gompertz (BG). Esta incluye algunas distribuciones de duración de vida bien conocidas como la beta exponencial y distribuciones Gompertz generalizadas como casos especiales. Esta nueva distribución es flexible y puede ser usada de manera efectiva en datos de sobrevida y problemas de confiabilidad. Su función de tasa de falla puede ser decreciente, creciente o en forma de bañera dependiendo de sus parámetros. Algunas propiedades matemáticas de la distribución como expresiones en forma cerrada para la densidad, función de distribución, función de riesgo, momentos k-ésimos, función generadora de momentos, entropía de Shannon y cuantiles son presentados. Se discute la estimación máximo verosímil de los parámetros desconocidos del nuevo modelo para la muestra completa y se obtiene una expresión para la matriz de información. Con el fin de mostrar la flexibilidad de esta distribución, se presenta una aplicación con datos reales. Al final, un estudio de simulación es desarrollado.
\keywords{Distribución de Gompertz, estimación máximo verosímil, función beta}
\end{secondaryabstract}
\section{Introduction}
\label{sec.int}
The Gompertz (G) distribution is a flexible distribution that can be skewed to the right and to the left. This distribution is a generalization of the  exponential (E) distribution and is commonly used in many applied problems, particularly in lifetime data analysis
 \citeaffixed[p. 25]{jo-ko-ba-95-2}{}. The G distribution is considered for the analysis of survival, in some sciences such as gerontology \cite{br-fo-74}, computer \cite{oh-ok-do-09}, biology \cite{economos-82},  and marketing science \cite{be-gl-12}. The hazard rate function (hrf) of G distribution is an increasing function and often applied to describe the distribution of adult life spans by actuaries and demographers \cite{wi-ko-00}.
The G distribution with parameters $\theta>0$ and $\gamma>0$  
 has the cumulative distribution function (cdf)
\begin{eqnarray}\label{eq.G}
G(x)=1-e^{-\frac{\theta}{\gamma}(e^{\gamma x}-1)}, \;\;x\geq0, \;\; \beta>0, \;\;\gamma>0,
\end{eqnarray}
and the  probability density function (pdf)
\begin{eqnarray}\label{eq.g}
g(x)= \theta e^{\gamma x}e^{-\frac{\theta}{\gamma}(e^{\gamma x}-1)}.
\end{eqnarray}
This case is denoted by $X\sim \text{G}(\theta,\gamma)$.

Recently, a generalization based on the idea of \citeasnoun{gu-ku-99} was proposed by \citeasnoun{el-al-al-13}.
This new distribution is known as generalized Gompertz (GG) distribution which  includes the E, generalized exponential (GE), and G distributions \cite{el-al-al-13}.

In this paper, we introduce a new generalization of G distribution
which results of the application of the G distribution to the beta generator proposed by \citeasnoun{eu-le-fa-02},
called the  beta-Gompertz (BG) distribution.
Several generalized distributions have been proposed under this methodology:
beta-Normal distribution \cite{eu-le-fa-02},  
beta-Gumbel distribution \cite{na-ko-04},     
beta-Weibull distribution \cite{fa-le-ol-05}, 
beta-exponential (BE) distribution, \cite{na-ko-06}, 
beta-Pareto distribution \cite{ak-fa-le-08},   
beta-modified Weibull distribution \cite{si-or-co-10}, 
beta-generalized normal distribution \cite{ci-re-co-na-12}. 
The BG distribution includes some well-known distribution: E distribution,
GE distribution \cite{gu-ku-99}, 
BE distribution \cite{na-ko-06},  
G distribution, GG distribution \cite{el-al-al-13}. 

This paper is organized as follows: In Section \ref{sec.def}, we define the density and failure rate functions and outline some special cases of the BG distribution. In Sections \ref{sec.pr} we provide some extensions and properties of the cdf, pdf, $k$th  moment and moment generating function of the BG distribution. Furthermore, in these sections, we derive corresponding expressions for the order statistics, Shannon entropy and quantile measure. In Section \ref{sec.es},
we discuss maximum likelihood estimation of the BG parameters from one observed sample and derive the observed Fisher's information matrix.
A simulation study is performed in Section \ref{sec.si}. Finally,
an application of the BG using a real data set is presented in
Section \ref{sec.ex}.

\section{The BG distribution}
\label{sec.def}
In this section, we introduce the four-parameter BG distribution. The idea of  this distribution rises  from the following general class:
If $G$ denotes the cdf of a random variable then a generalized class of distributions can be
defined by
\begin{equation}\label{eq.FG}
F(x)=I_{G(x)}(\alpha,\beta)=\frac{1}{B(\alpha,\beta)}\int_{0}^{G(x)}t^{\alpha-1}(1-t)^{\beta-1}dt,\;\; \alpha,\beta>0,
\end{equation}
 where $I_{y}(\alpha,\beta)=\frac{B_{y}(\alpha,\beta)}{B(\alpha,\beta)}$ is the incomplete beta function ratio and
 $B_{y}(\alpha,\beta)=\int_{0}^{y}t^{\alpha-1}(1-t)^{\beta-1}dt$ is the incomplete beta function.

Consider that $g(x)=\frac{d G(x)}{dx}$ is the density of the baseline distribution. Then the probability density function corresponding to (\ref{eq.FG}) can be written in the form
\begin{equation}
f(x)=\frac{g(x)}{B(\alpha,\beta)}[G(x)]^{\alpha-1}[1-G(x)]^{\beta-1}.
\end{equation}
We now introduce the BG distribution by taking $G(x)$ in (\ref{eq.FG}) to the cdf in  (\ref{eq.G}) of the G distribution. Hence, the pdf of BG can be written as
\begin{equation}\label{eq.fBG}
f(x)=\frac{\theta e^{\gamma x}e^{-\frac{\beta\theta}{\gamma}(e^{\gamma x}-1)}}{B(\alpha,\beta)}[1-e^{-\frac{\theta}{\gamma}(e^{\gamma x}-1)}]^{\alpha-1}.
\end{equation}
and we use the notation $X\sim BG(\theta,\gamma,\alpha,\beta)$.

\begin{theorem}
Let $f(x)$  be the pdf of the BG distribution. The limiting behavior of $f$ for different values of its parameters is given bellow:
\begin{itemize}
  \item[i.]  If $\alpha=1$ then ${\mathop{\lim }_{x \rightarrow 0^+}  f(x)}=\theta\beta.$
  \item[ii.] If $\alpha>1$ then ${\mathop{\lim }_{x \rightarrow 0^+}  f(x)}=0.$
  \item[iii.] If $0<\alpha<1$ then ${\mathop{\lim }_{x \rightarrow 0^+} f(x)}=\infty.$
  \item[iv.] ${\mathop{\lim }_{x \rightarrow \infty} f(x )}=0.$
\end{itemize}
\end{theorem}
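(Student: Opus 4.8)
The plan is to analyze the three multiplicative factors of $f$ separately, since the constant $1/B(\alpha,\beta)$ plays no role in the limits and each factor has a clean limit at the two endpoints. Writing $f(x)=\frac{\theta}{B(\alpha,\beta)}\,A(x)\,P(x)\,C(x)$ with $A(x)=e^{\gamma x}$, $P(x)=e^{-\frac{\beta\theta}{\gamma}(e^{\gamma x}-1)}$, and $C(x)=[1-e^{-\frac{\theta}{\gamma}(e^{\gamma x}-1)}]^{\alpha-1}$, I would first record the behaviour of each piece as $x\to 0^+$: since $e^{\gamma x}-1\to 0$, both $A(x)\to 1$ and $P(x)\to 1$, so only $C(x)$ can be responsible for a nontrivial limit in parts (i)--(iii).

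The key step for parts (i)--(iii) is the local expansion $e^{\gamma x}-1\sim \gamma x$ as $x\to 0^+$, which gives $\frac{\theta}{\gamma}(e^{\gamma x}-1)\sim\theta x$ and hence $1-e^{-\frac{\theta}{\gamma}(e^{\gamma x}-1)}\sim \theta x\to 0^+$. Thus $C(x)\sim(\theta x)^{\alpha-1}$, and the three cases are dictated purely by the sign of $\alpha-1$: for $\alpha>1$ the exponent is positive so $C(x)\to 0$ and $f(x)\to 0$ (part ii); for $0<\alpha<1$ the exponent is negative so $C(x)\to\infty$ and $f(x)\to\infty$ (part iii). The only case requiring a genuine evaluation is $\alpha=1$, where $C(x)\equiv 1$ and the limit reduces to $\theta/B(1,\beta)$; here I would use $B(1,\beta)=\frac{\Gamma(1)\Gamma(\beta)}{\Gamma(1+\beta)}=\frac1\beta$ to conclude $f(x)\to\theta\beta$ (part i).

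For part (iv) the plan is to isolate the competition between the growing factor $A(x)=e^{\gamma x}$ and the decaying factor $P(x)$. As $x\to\infty$ we have $C(x)\to(1-0)^{\alpha-1}=1$, so the limit is governed by $A(x)P(x)=e^{\gamma x}\exp\!\left(-\frac{\beta\theta}{\gamma}(e^{\gamma x}-1)\right)$. Because $\beta,\theta,\gamma>0$, the exponent contains the doubly-exponential term $-\frac{\beta\theta}{\gamma}e^{\gamma x}$, which dominates the single exponential growth $e^{\gamma x}$; substituting $u=e^{\gamma x}\to\infty$ turns the product into a constant multiple of $u\,e^{-cu}$ with $c=\beta\theta/\gamma>0$, which tends to $0$. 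Hence $f(x)\to 0$.

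The main obstacle is bookkeeping rather than depth: one must justify passing from the $\sim$ statements to honest limits (i.e.\ that $C(x)\sim(\theta x)^{\alpha-1}$ really forces $f(x)\to 0$ or $\infty$ once the other two factors converge to $1$), and one must keep the expansion $e^{\gamma x}-1\sim\gamma x$ only to the order needed. The genuinely computational point is the Beta-function evaluation $B(1,\beta)=1/\beta$ that produces the exact constant $\theta\beta$ in part (i).
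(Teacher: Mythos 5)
Your proposal is correct, and for parts (i)--(iii) it supplies exactly the detail the paper omits (the paper declares these cases ``obvious''): the factorization into $A(x)P(x)C(x)$ with $A,P\to 1$ as $x\to 0^+$, the expansion $1-e^{-\frac{\theta}{\gamma}(e^{\gamma x}-1)}\sim\theta x$, and the evaluation $B(1,\beta)=1/\beta$ together give a complete argument; note that for (ii) and (iii) you do not even need the full asymptotic $C(x)\sim(\theta x)^{\alpha-1}$ --- the sign of $\alpha-1$ and the fact that the base tends to $0^+$ already force $C(x)\to 0$ or $C(x)\to\infty$. For part (iv) your route differs slightly from the paper's: the paper squeezes $f$ using the bound $0\le\bigl[1-e^{-\frac{\theta}{\gamma}(e^{\gamma x}-1)}\bigr]^{\alpha-1}<1$ and then asserts that the envelope $\theta e^{\gamma x}e^{-\frac{\beta\theta}{\gamma}(e^{\gamma x}-1)}$ tends to $0$, whereas you observe $C(x)\to 1$ and compute the limit of $A(x)P(x)$ honestly via the substitution $u=e^{\gamma x}$, reducing it to $u\,e^{-cu}\to 0$ with $c=\beta\theta/\gamma>0$. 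Your version is actually the more careful one on two counts: the paper's inequality $[\,\cdot\,]^{\alpha-1}<1$ is literally false when $0<\alpha<1$ (a base in $(0,1)$ raised to a negative power exceeds $1$, so the stated squeeze only works as written for $\alpha\ge 1$, and would need a local bound for large $x$ otherwise), and the paper leaves the decisive limit as ``easily shown'' while your substitution proves it. So: same elementary spirit, but your product-of-limits treatment of (iv) closes a small gap in the paper's bound and uniformly covers all $\alpha>0$.
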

\begin{proof}
The proof of parts (i)-(iii) are obvious. For part (iv), we have
$$
0\leq [1-e^{-\frac{\theta}{\gamma}(e^{\gamma x}-1)}]^{\alpha-1}<1 \Rightarrow 0<f(x)<\frac{\theta e^{\gamma x}e^{-\frac{\beta\theta}{\gamma}(e^{\gamma x}-1)}}{B(\alpha,\beta)}.
$$
It can be easily shown that
$${\mathop{\lim }_{x \rightarrow \infty} \theta e^{\gamma x}e^{-\frac{\beta\theta}{\gamma}(e^{\gamma x}-1)}}=0.$$
and the proof is completed.
\end{proof}

The hrf of BG distribution is given by
\begin{equation}
h(x)=\frac{\theta e^{\gamma x}e^{-\frac{\beta\theta}{\gamma}(e^{\gamma x}-1)}}{B(\alpha,\beta)-B_{G(x)}(\alpha,\beta)}[1-e^{-\frac{\theta}{\gamma}(e^{\gamma x}-1)}]^{\alpha-1}.
\end{equation}
Recently, it is observed \cite{gu-gu-07} 
 that the reversed hrf plays an important role in the reliability analysis. The reversed hrf of the $BG(\theta,\gamma,\alpha,\beta)$  is
\begin{equation}
r(x)=\frac{\theta e^{\gamma x}e^{-\frac{\beta\theta}{\gamma}(e^{\gamma x}-1)}}{B_{G(x)}(\alpha,\beta)}[1-e^{-\frac{\theta}{\gamma}(e^{\gamma x}-1)}]^{\alpha-1}.
\end{equation}

Plots of pdf and hrf function of the BG distribution for different values of its parameters are given in Figure \ref{fig.fGB} and Figure \ref{fig.hGB}, respectively.

\begin{figure}
\centering
\includegraphics[scale=0.27]{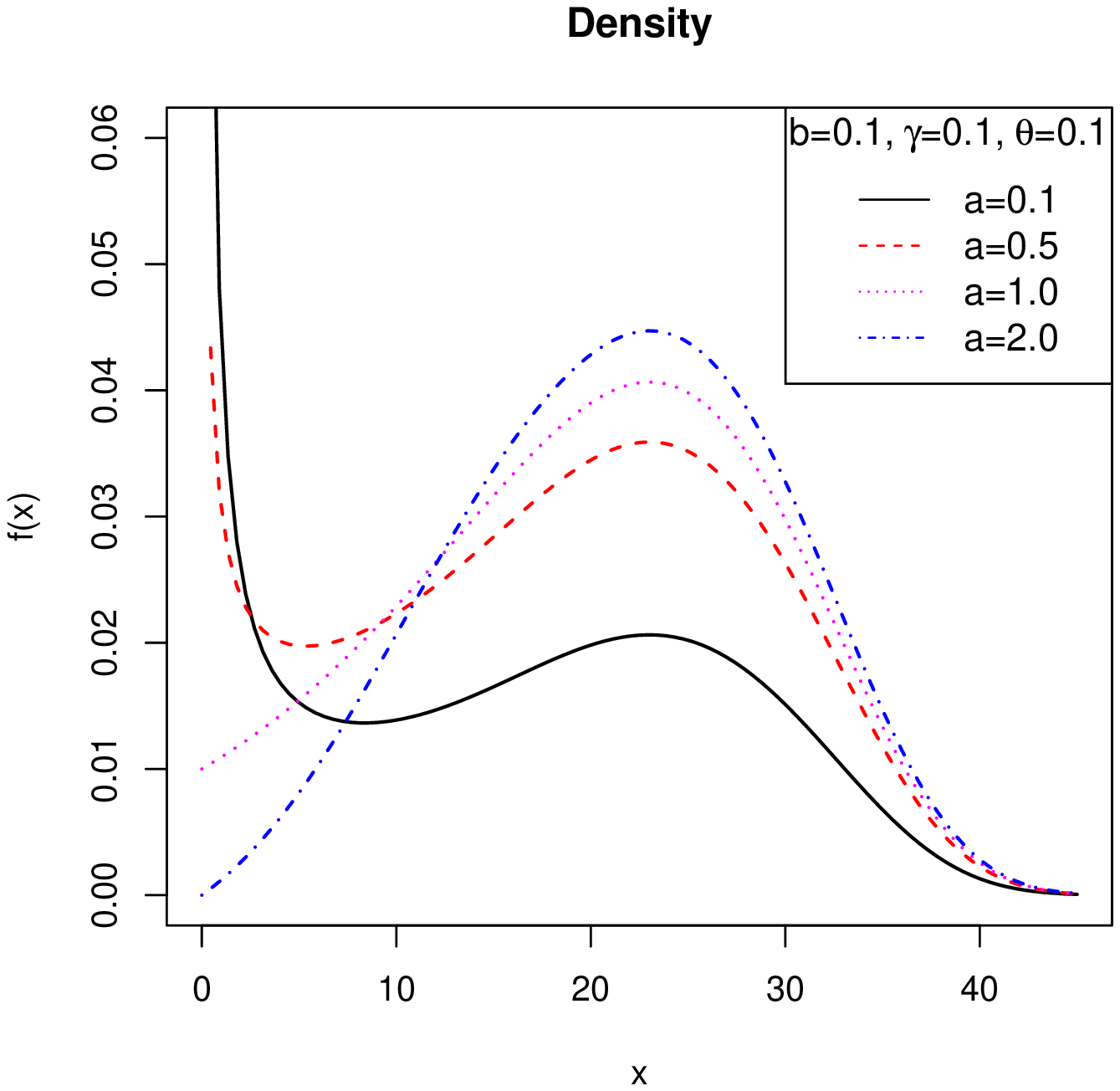}
\includegraphics[scale=0.27]{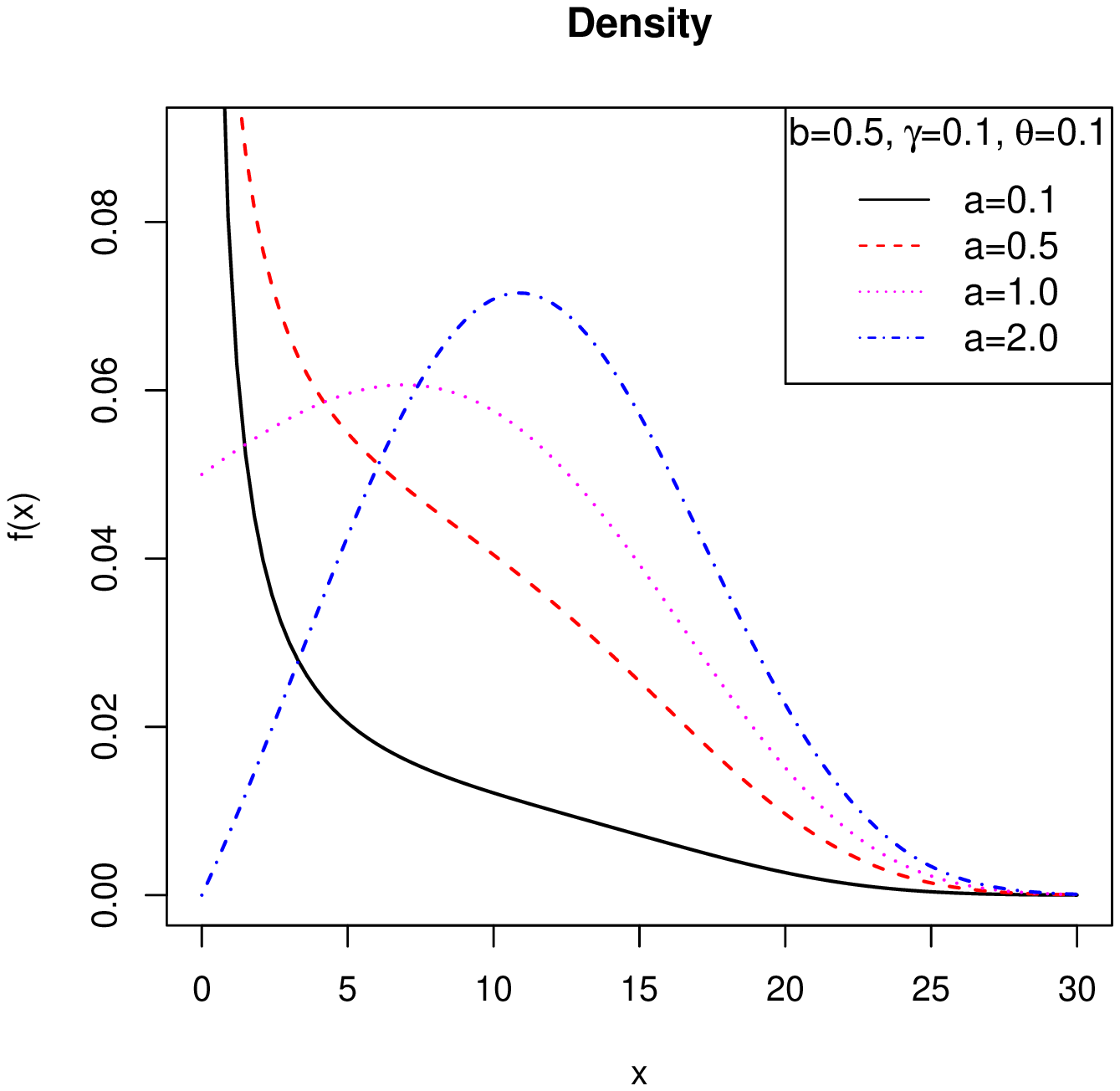}
\includegraphics[scale=0.27]{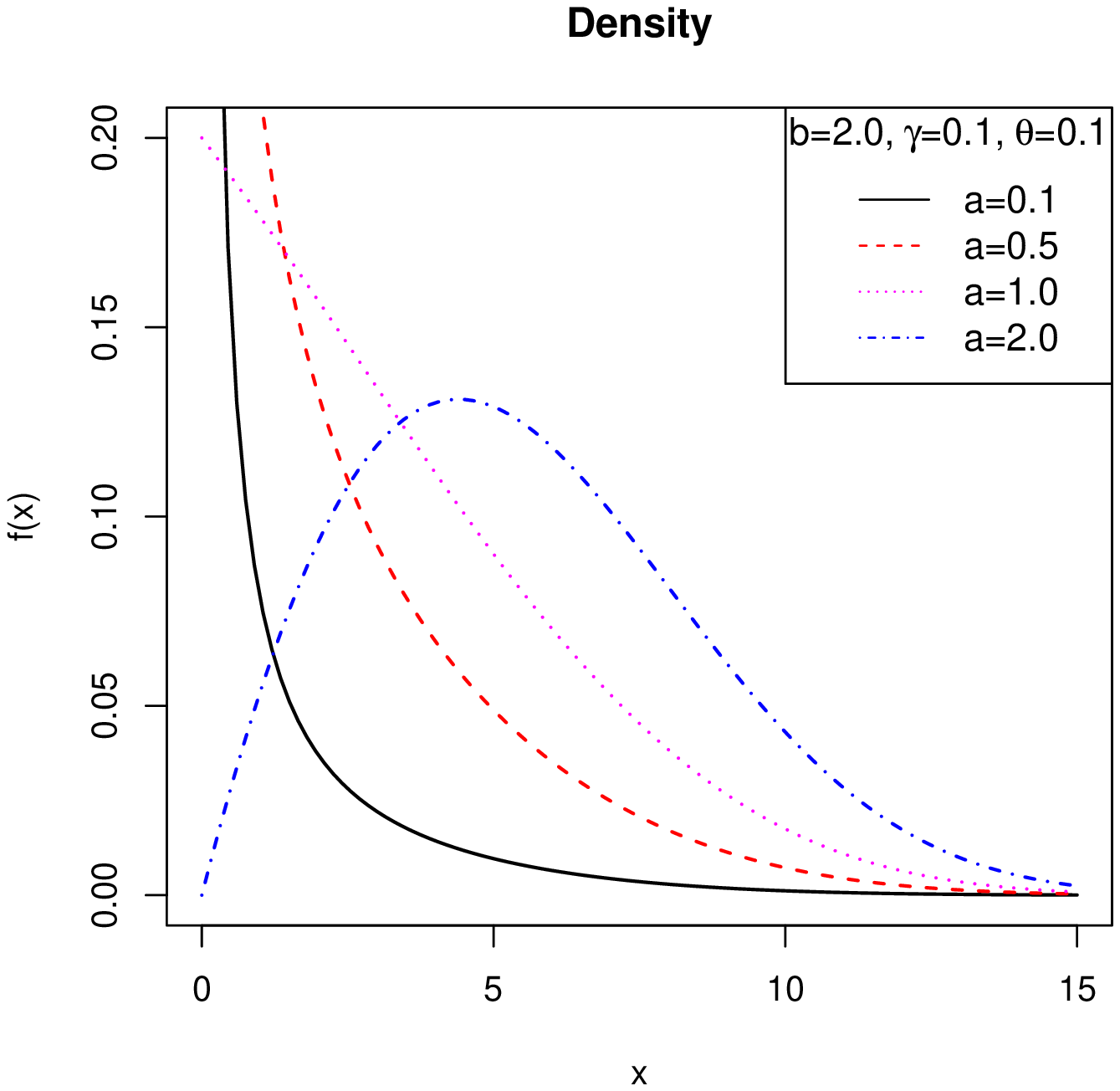}
\includegraphics[scale=0.27]{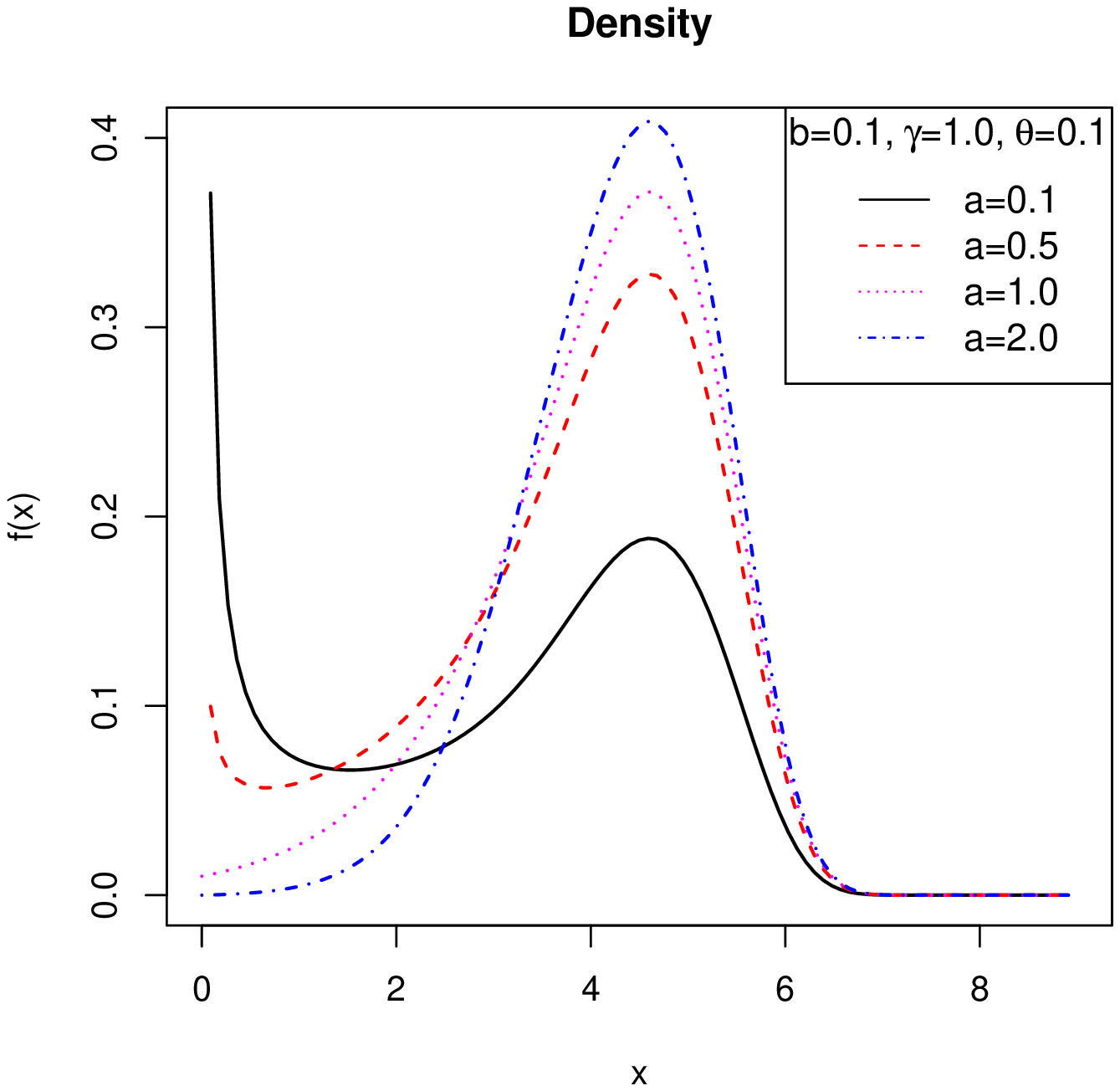}
\includegraphics[scale=0.27]{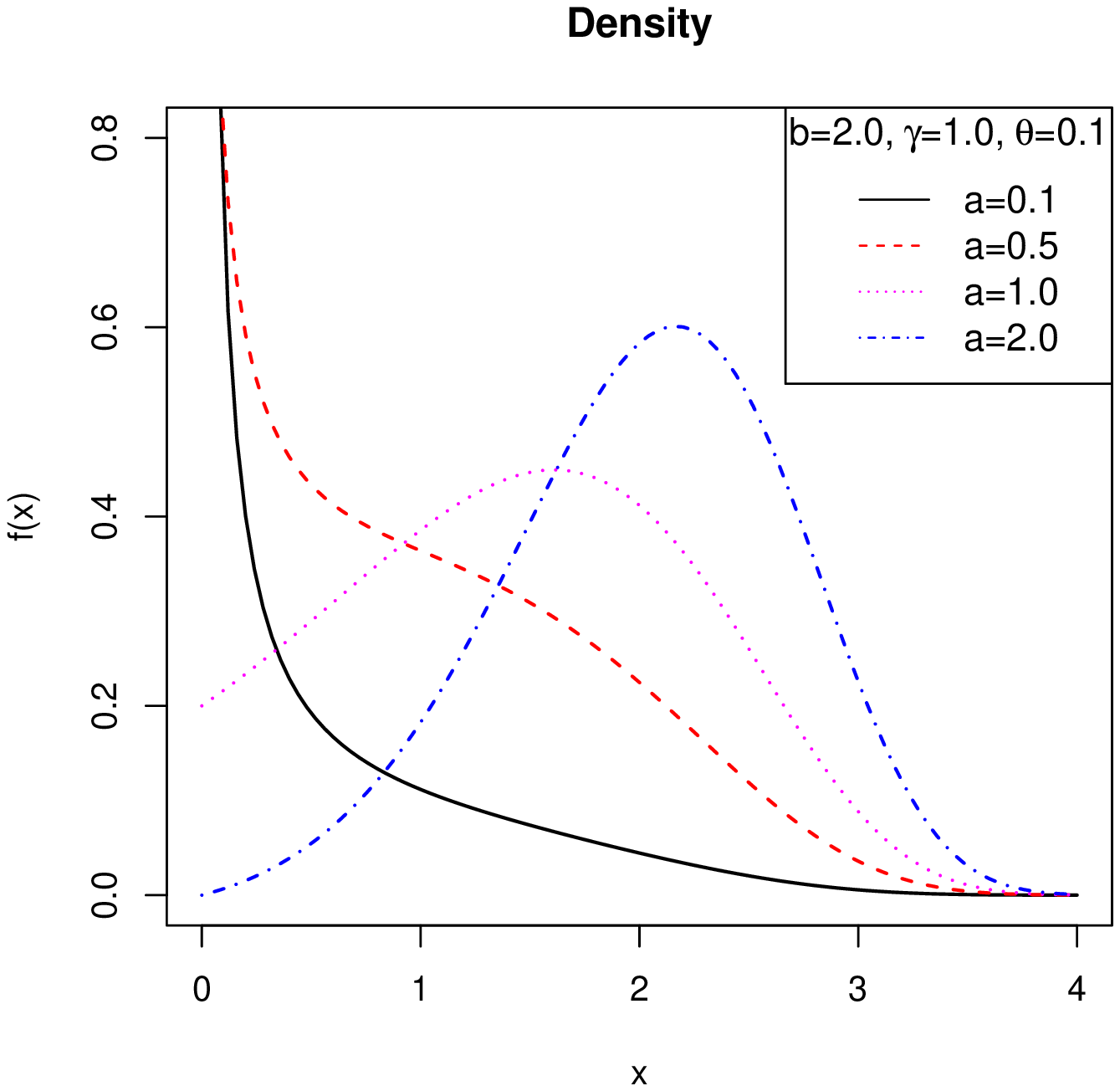}
\includegraphics[scale=0.27]{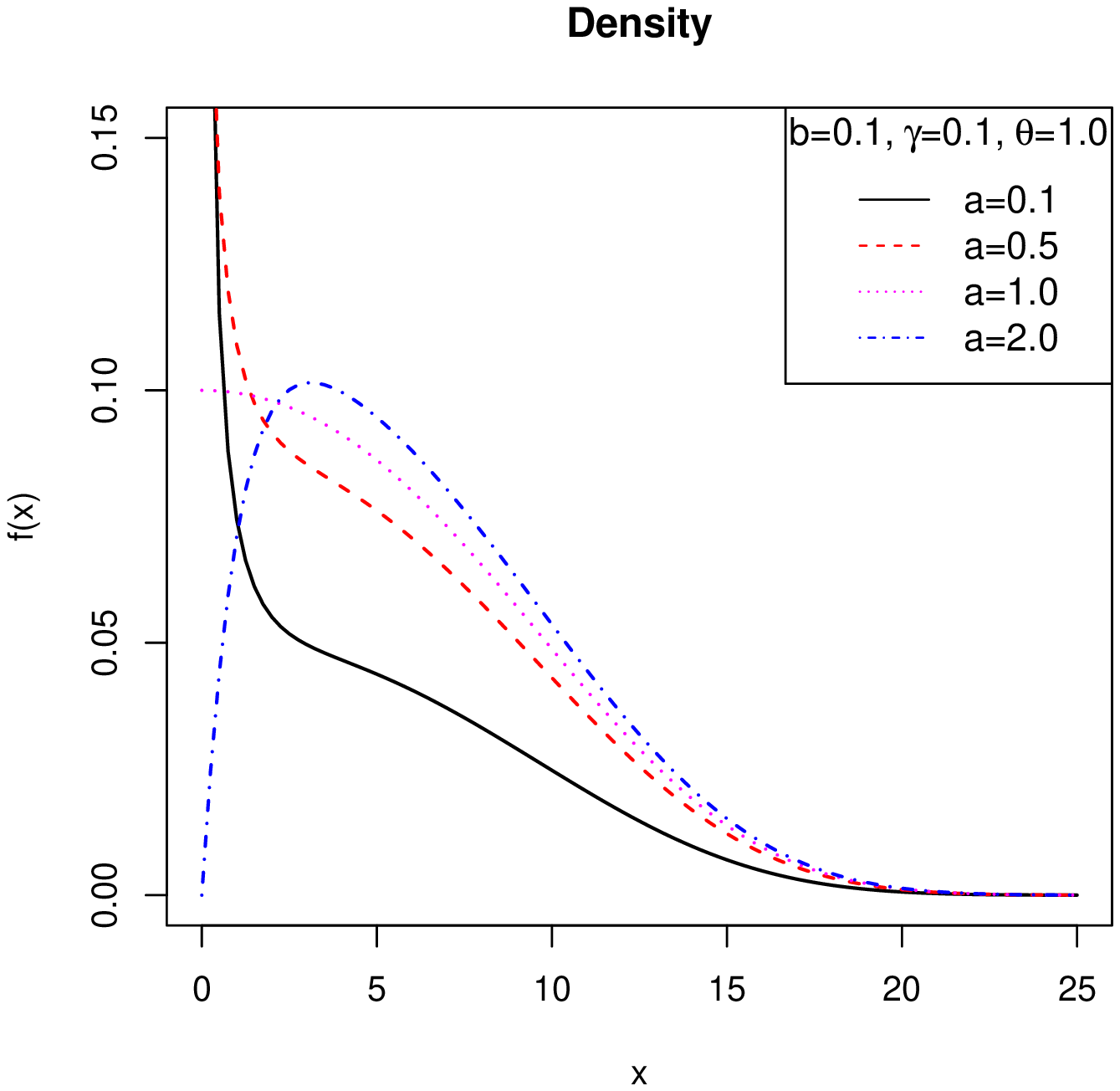}
\caption[]{Plots of density  functions of BG for different values of parameters.}\label{fig.fGB}
\end{figure}

\begin{figure}
\centering
\includegraphics[scale=0.27]{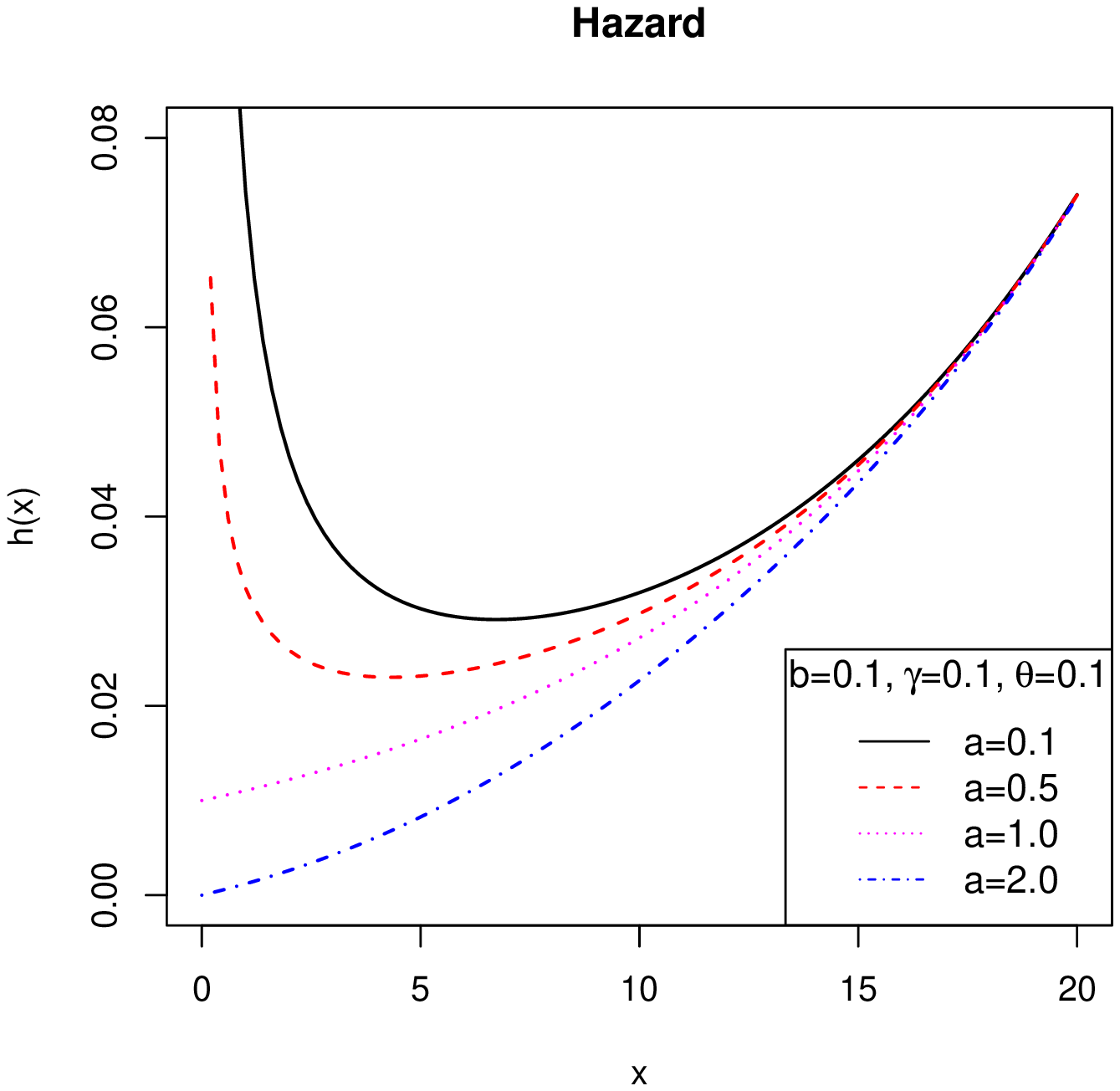}
\includegraphics[scale=0.27]{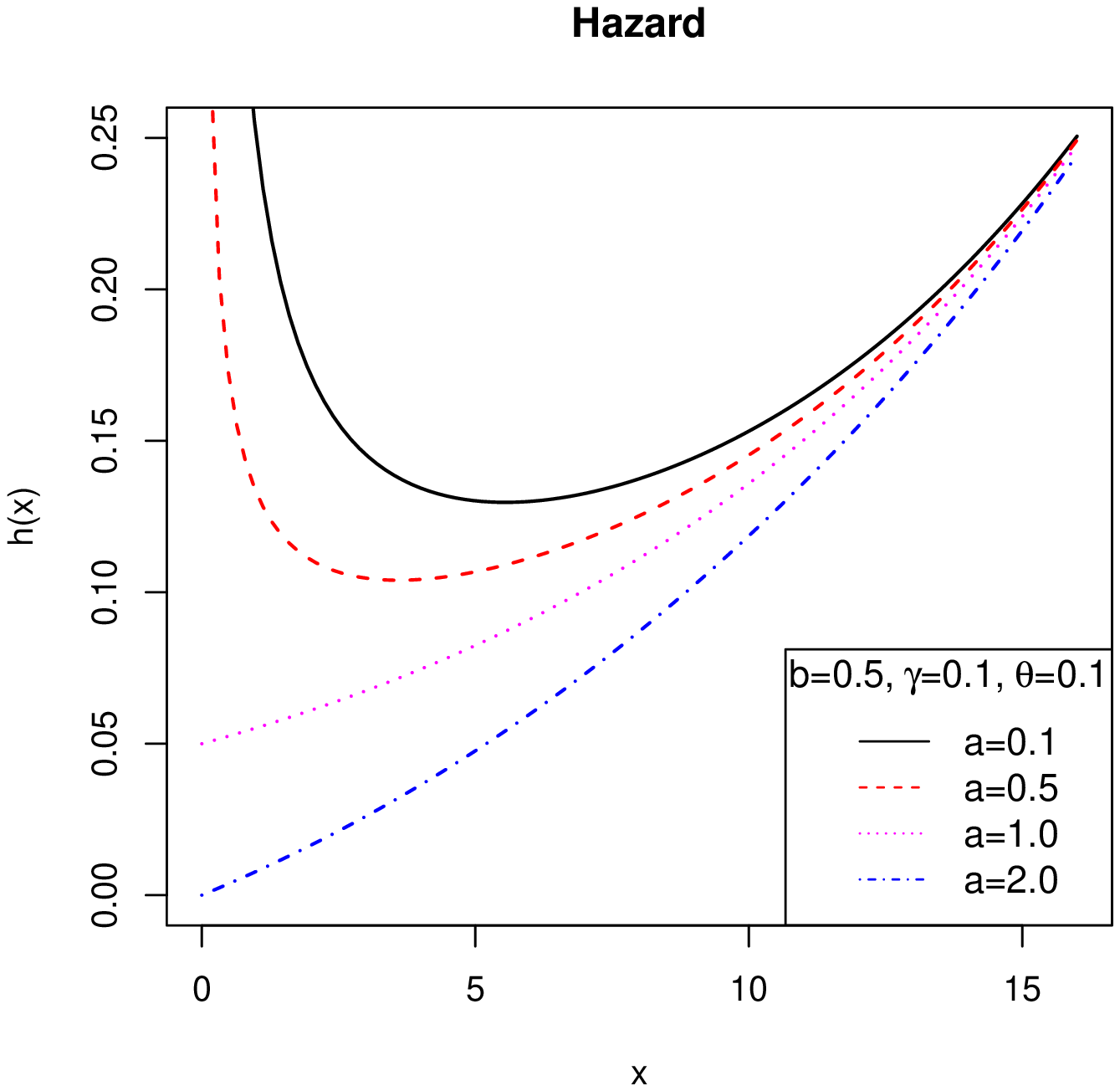}
\includegraphics[scale=0.27]{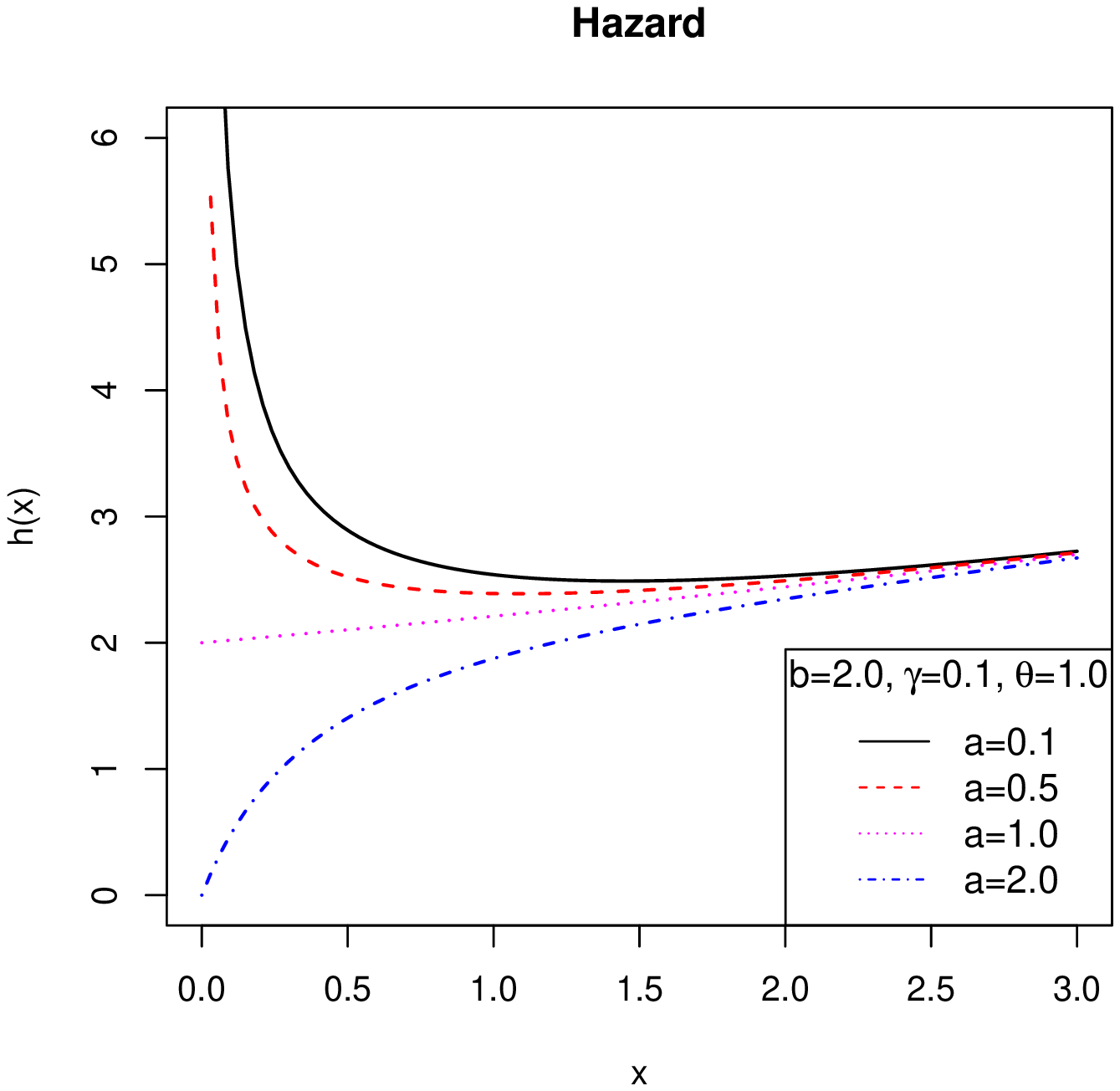}
\includegraphics[scale=0.27]{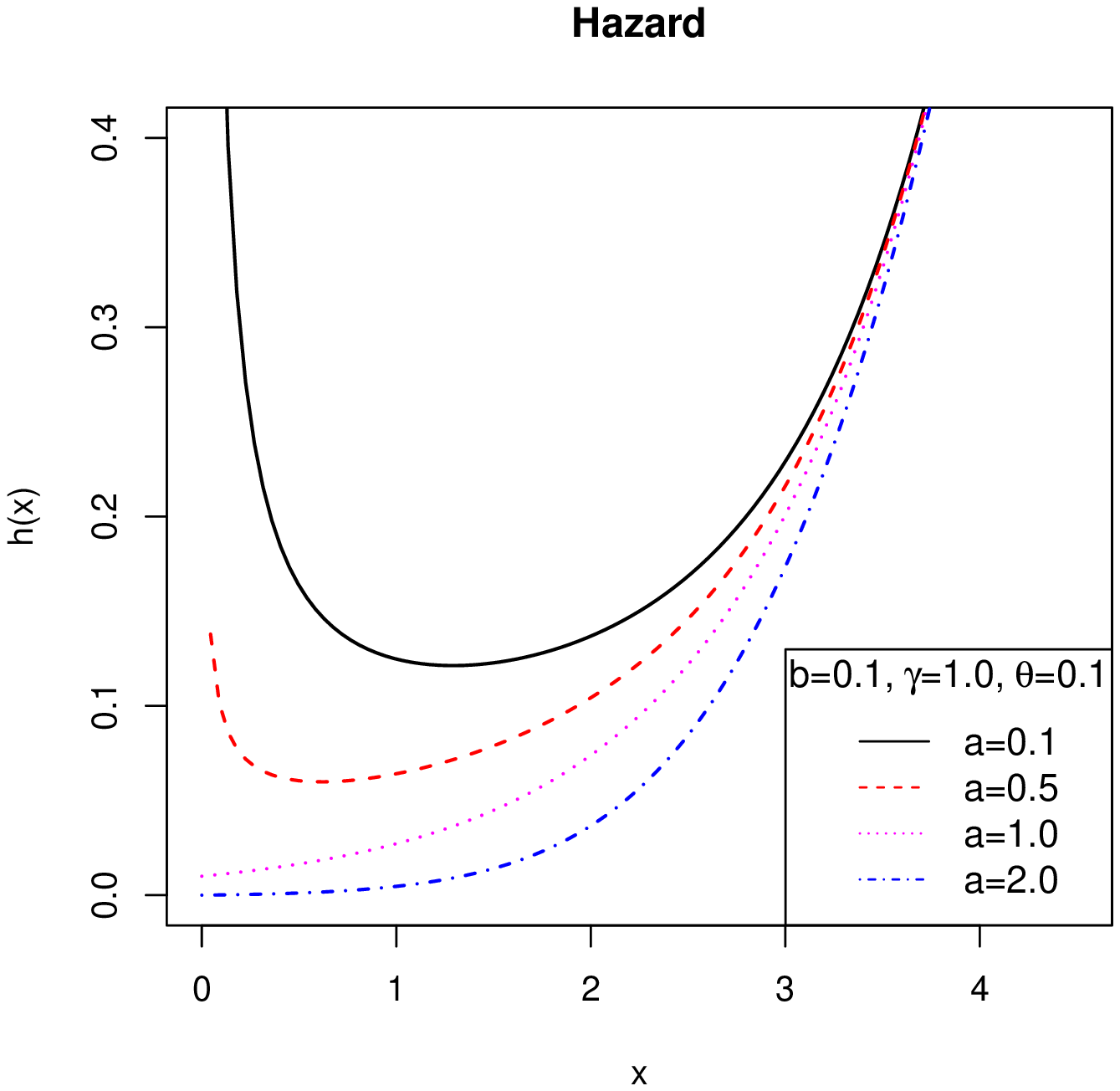}
\includegraphics[scale=0.27]{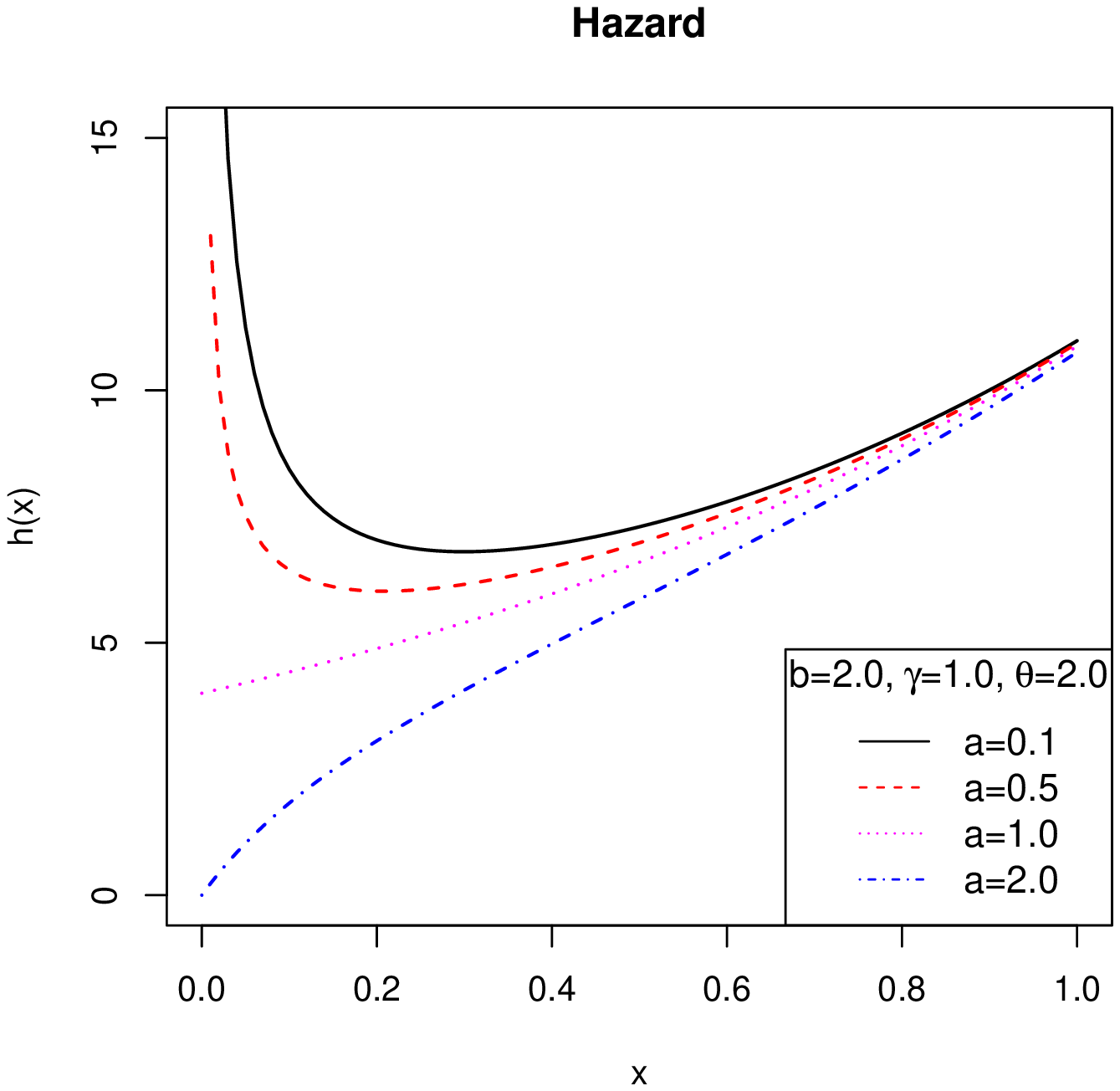}
\includegraphics[scale=0.27]{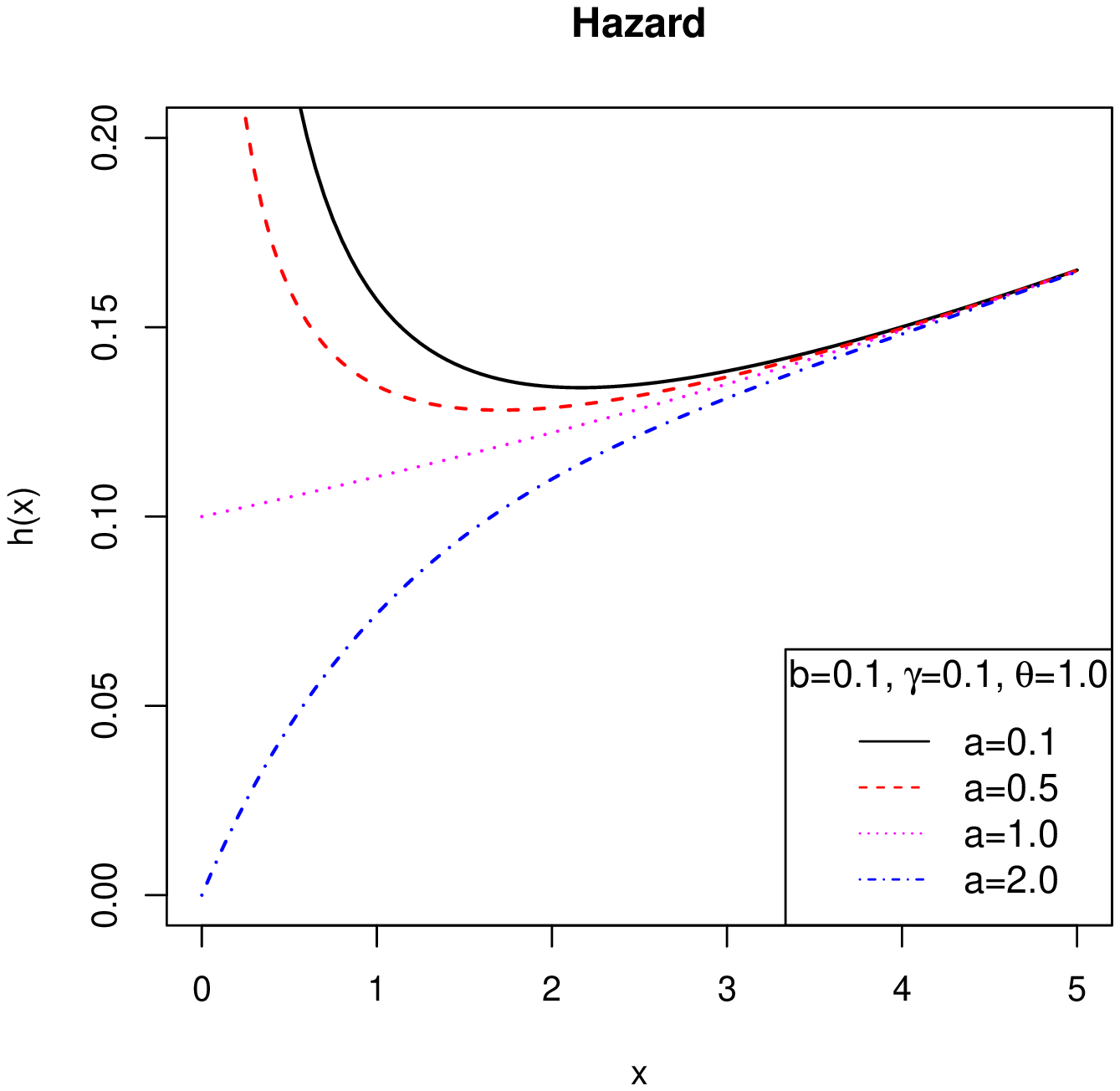}
\caption[]{Plots of hrf of BG for different values of parameters.}\label{fig.hGB}
\end{figure}

Some well-known distributions are special cases of the BG distribution:
 \begin{itemize}
   \item[1.] If $\alpha=1$, $\beta=1$, $\gamma\rightarrow0$, then we get the E distribution.
   \item[2.] If $\beta=1$, $\gamma\rightarrow0$, then we get the GE distribution  which is introduced by \citeasnoun{gu-ku-99} 
   \item[3.]  If $\beta=1$, then we get the GG distribution  which is introduced by \citeasnoun{el-al-al-13}.
   \item[4.]  If $\alpha=1$, $\beta=1$, then we get the G distribution.
   \item[5.]  If $\gamma\rightarrow0$, then we get the BE which is introduced by \citeasnoun{na-ko-06}.
  \end{itemize}

If the random variable $X$ has BG distribution, then it has the following properties:
  \begin{itemize}
   \item[1.] the random variable
   $$Y=1-e^{-\frac{\theta}{\gamma}(e^{\gamma X}-1)},$$
 satisfies the beta distribution with parameters $\alpha$ and $\beta$. Therefore,
  $$T=\frac{\theta}{\gamma}(e^{\gamma X}-1),$$
  satisfies the BE distribution with parameters 1, $\alpha$ and $\beta$  $(BE(1,\alpha,\beta))$.
   \item[2.] If $\alpha=i$ and $\beta=n-i$, where $i$ and $n$ are positive integer values, then the $f(x)$ is the
density function of $i$th order statistic of G  distribution.
   \item[3.]  If $V$ follows Beta distribution with parameters $\alpha$  and $\beta$, then
   $$X=G^{-1}(V)=\frac{1}{\gamma}\log\left(1- \frac{\gamma}{\theta}\log(1-V)\right),$$ follows BG distribution. This result helps in simulating data from the BG  distribution.

 \end{itemize}

For checking the consistency of the simulating data set form BG distribution, the histogram for a generated data set with size 100 and the exact BG density with parameters $\theta=0.1$ and $\gamma=1.0$ , $\alpha=0.1$, and $\beta=0.1$,  are displayed in Figure \ref{Fig.gd} (left). Also, the empirical
distribution function and the exact distribution function is given in Figure \ref{Fig.gd} (right).

\begin{figure}[ht]
\centering
\includegraphics[scale=0.4]{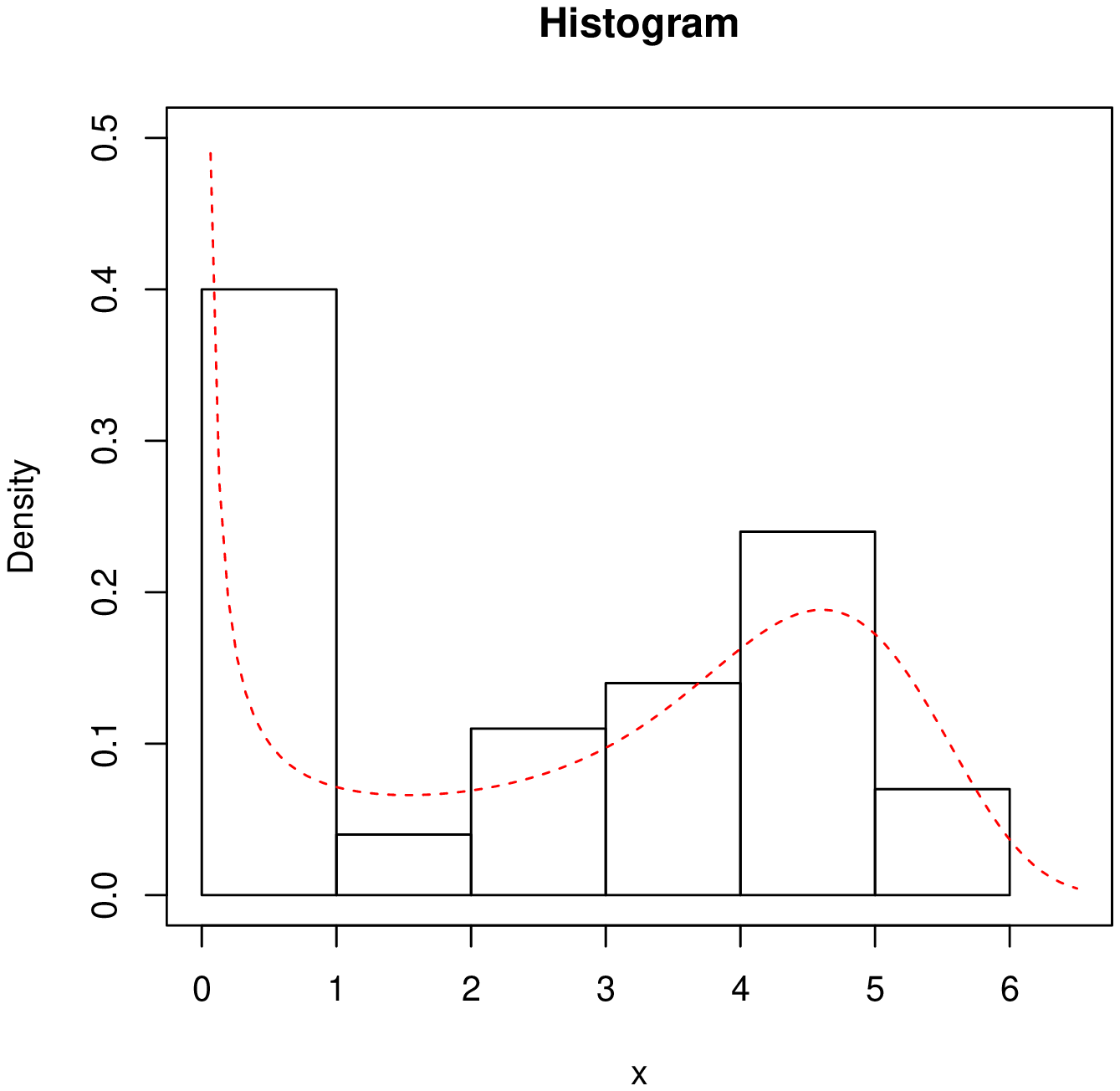}
\includegraphics[scale=0.4]{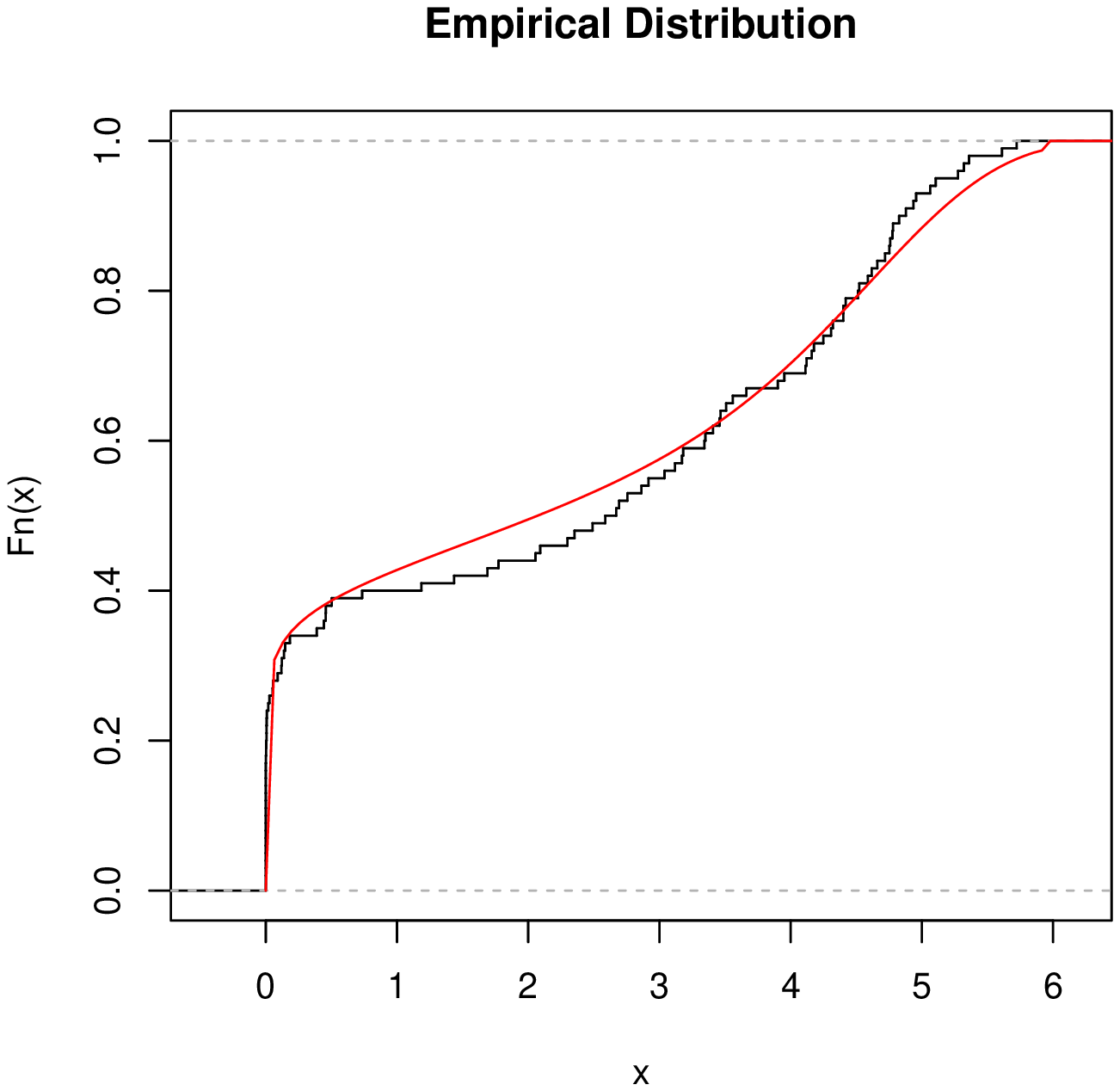}

\caption[]{The histogram of a generated data set with size 100 and the exact GPS  density (left) and the empirical distribution
function and exact distribution function (right). } \label{Fig.gd}
\end{figure}

\section{Some extensions and properties}
\label{sec.pr}
Here, we present some representations of the cdf, pdf, $k$th  moment and moment generating function of BG distribution. Also, we provide expressions for the order statistics, Shannon entropy and quantile measure of this distribution. The mathematical relation given below will be useful in this section. If $\beta$ is a positive real non-integer and $|z|<1$, then  \citeaffixed[p. 25]{gr-ry-07}{}
$$
(1-z)^{\beta-1}=\sum\limits_{j=0}^{\infty} w_{j}z^{j},
$$
and if $\beta$ is a positive real integer, then the upper of the this summation stops at $\beta-1$, where
$$
w_{j}=\frac{(-1)^{j}\Gamma(\beta)}{\Gamma(\beta-j)\Gamma(j+1)}.
$$

\begin{proposition}
We can express (\ref{eq.FG}) as a mixture of distribution function of GG  distributions
as follows:
$$
F(x)=\sum\limits_{j=0}^{\infty} p_{j}[G(x)]^{\alpha+j}=\sum\limits_{j=0}^{\infty} p_{j}G_{j}(x),
$$
where $p_{j}=\frac{(-1)^{j}\Gamma(\alpha+\beta)}{\Gamma(\alpha)\Gamma(\beta-j)\Gamma(j+1)(\alpha+j)}$ and $G_{j}(x)=(G(x))^{\alpha+j}$ is distribution function of a random variable which has a GG  distribution with parameters $\theta$, $\gamma$, and $\alpha+j$. Also, we can write
\begin{eqnarray}
G(x)^{\alpha+j}
&=&\sum\limits_{k=0}^{\infty}(-1)^{k}\binom{\alpha+j}{k}(1-G(x))^{k} \nonumber\\
&=&\sum\limits_{r=0}^{\infty}\sum\limits_{k=r}^{\infty}(-1)^{k+r}\binom{\alpha+j}{k}\binom{k}{r}G(x)^{r},
\end{eqnarray}
and
\begin{eqnarray}
F(x)=\sum\limits_{j=0}^{\infty}\sum\limits_{r=0}^{\infty} \sum\limits_{k=r}^{\infty}p_{j}(-1)^{k+r}\binom{\alpha+j}{k}\binom{k}{r}G(x)^{r}
=\sum\limits_{r=0}^{\infty}b_{r}G(x)^{r},
\end{eqnarray}
where $b_{r}=\sum\limits_{j=0}^{\infty}\sum\limits_{k=r}^{\infty}p_{j}(-1)^{k+r}\binom{\alpha+j}{k}\binom{k}{r}$.
\end{proposition}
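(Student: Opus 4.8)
The plan is to begin from the integral representation (\ref{eq.FG}) and reduce it to a power series in $G(x)$ by expanding the binomial factor in the integrand. Writing $F(x)=\frac{1}{B(\alpha,\beta)}\int_{0}^{G(x)}t^{\alpha-1}(1-t)^{\beta-1}\,dt$ and applying the series $(1-t)^{\beta-1}=\sum_{j\geq 0}w_{j}t^{j}$ stated just before the proposition (legitimate since $t\in(0,G(x))\subseteq(0,1)$), I would interchange summation and integration to obtain $F(x)=\frac{1}{B(\alpha,\beta)}\sum_{j\geq 0}w_{j}\int_{0}^{G(x)}t^{\alpha+j-1}\,dt=\frac{1}{B(\alpha,\beta)}\sum_{j\geq 0}\frac{w_{j}}{\alpha+j}[G(x)]^{\alpha+j}$. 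The coefficient then simplifies directly: substituting $w_{j}=\frac{(-1)^{j}\Gamma(\beta)}{\Gamma(\beta-j)\Gamma(j+1)}$ and $B(\alpha,\beta)=\Gamma(\alpha)\Gamma(\beta)/\Gamma(\alpha+\beta)$, the factor $\frac{w_{j}}{(\alpha+j)B(\alpha,\beta)}$ collapses to exactly $p_{j}$, which yields the first identity $F(x)=\sum_{j\geq 0}p_{j}[G(x)]^{\alpha+j}$. Since $G_{j}(x)=[G(x)]^{\alpha+j}$ is the cdf of a GG distribution with parameters $\theta,\gamma,\alpha+j$, this is genuinely a mixture representation.

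For the two expansions of $[G(x)]^{\alpha+j}$, I would invoke the generalized binomial theorem twice. First, writing $[G(x)]^{\alpha+j}=[1-(1-G(x))]^{\alpha+j}$ and expanding in powers of $1-G(x)$ (valid for $x>0$, where $1-G(x)\in(0,1)$) gives the first equality $\sum_{k\geq 0}(-1)^{k}\binom{\alpha+j}{k}(1-G(x))^{k}$. Then expanding each $(1-G(x))^{k}$ by the ordinary binomial theorem as $\sum_{r=0}^{k}(-1)^{r}\binom{k}{r}G(x)^{r}$ and swapping the two sums (so that $r$ runs from $0$ and $k$ runs from $r$) produces the double-sum form $\sum_{r\geq 0}\sum_{k\geq r}(-1)^{k+r}\binom{\alpha+j}{k}\binom{k}{r}G(x)^{r}$.

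Finally, substituting this double sum into $F(x)=\sum_{j\geq 0}p_{j}[G(x)]^{\alpha+j}$ and collecting the coefficient of each power $G(x)^{r}$ gives $F(x)=\sum_{r\geq 0}b_{r}G(x)^{r}$ with $b_{r}=\sum_{j\geq 0}\sum_{k\geq r}p_{j}(-1)^{k+r}\binom{\alpha+j}{k}\binom{k}{r}$, as claimed. The main obstacle here is not any individual computation but the justification of the repeated interchanges of summation with integration and the reordering of the resulting triple series; these rest on the absolute convergence of the generalized binomial expansions on $(0,1)$, which I would either verify directly or, following the standing convention of this literature, treat as formal manipulations of the series.
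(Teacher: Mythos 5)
Your proposal is correct and follows essentially the same route as the paper: the paper's derivation (embedded in the proposition itself, relying on the expansion $(1-z)^{\beta-1}=\sum_{j} w_{j}z^{j}$ stated just before it) applies that series to the integrand of (\ref{eq.FG}), integrates term by term to obtain $p_{j}=\frac{w_{j}}{(\alpha+j)B(\alpha,\beta)}$, and then uses the two binomial expansions exactly as you do. Your explicit verification of the coefficient simplification and your remark on the interchange of sums and integrals (treated as formal manipulations in this literature, with termination at $j=\beta-1$ when $\beta$ is an integer) match the paper's intent.
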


\begin{proposition}
We can express (\ref{eq.fBG}) as a mixture of density function of GG  distributions as
follows:
$$
f(x)=\sum\limits_{j=0}^{\infty} p_{j}(\alpha+j)g(x)[G(x)]^{\alpha+j-1}=\sum\limits_{j=0}^{\infty} p_{j}g_{j}(x),
$$
where $g_{j}(x)$ is density function of a random variable which has a GG  distribution with parameters $\theta$, $\gamma$, and $\alpha+j$.
\end{proposition}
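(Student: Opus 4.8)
The plan is to obtain this representation directly from the definition of $f$ in (\ref{eq.fBG}), in close parallel with the derivation of Proposition 1. I would start from $f(x)=\frac{g(x)}{B(\alpha,\beta)}[G(x)]^{\alpha-1}[1-G(x)]^{\beta-1}$ and expand only the factor $[1-G(x)]^{\beta-1}$ using the series identity stated at the beginning of this section. This expansion is legitimate because $0\le G(x)<1$ for all $x\ge0$, so $|G(x)|<1$ and we may write $[1-G(x)]^{\beta-1}=\sum_{j=0}^{\infty}w_{j}[G(x)]^{j}$ with $w_{j}=\frac{(-1)^{j}\Gamma(\beta)}{\Gamma(\beta-j)\Gamma(j+1)}$ (the sum terminating at $j=\beta-1$ when $\beta$ is a positive integer). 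Substituting and absorbing the factor $[G(x)]^{\alpha-1}$ gives $f(x)=\frac{g(x)}{B(\alpha,\beta)}\sum_{j=0}^{\infty}w_{j}[G(x)]^{\alpha+j-1}$, so the entire proof of the first equality reduces to checking a single coefficient identity.

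That identity is $\frac{w_{j}}{B(\alpha,\beta)}=p_{j}(\alpha+j)$. I would verify it by writing $B(\alpha,\beta)=\frac{\Gamma(\alpha)\Gamma(\beta)}{\Gamma(\alpha+\beta)}$, cancelling the common $\Gamma(\beta)$, and observing that $\frac{w_{j}}{B(\alpha,\beta)}=\frac{(-1)^{j}\Gamma(\alpha+\beta)}{\Gamma(\alpha)\Gamma(\beta-j)\Gamma(j+1)}$, which is exactly $p_{j}(\alpha+j)$ once the factor $(\alpha+j)$ sitting in the denominator of $p_{j}$ is multiplied out. This establishes $f(x)=\sum_{j=0}^{\infty}p_{j}(\alpha+j)g(x)[G(x)]^{\alpha+j-1}$. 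For the second equality I would recall that the GG distribution with parameters $\theta,\gamma,\lambda$ has cdf $[G(x)]^{\lambda}$ and hence density $\lambda g(x)[G(x)]^{\lambda-1}$; taking $\lambda=\alpha+j$ identifies $g_{j}(x)=(\alpha+j)g(x)[G(x)]^{\alpha+j-1}$ as precisely that GG density, so that $f(x)=\sum_{j=0}^{\infty}p_{j}g_{j}(x)$, the asserted mixture form. I would also note, as a consistency check, that letting $x\to\infty$ in the cdf representation of Proposition 1 gives $\sum_{j=0}^{\infty}p_{j}=1$, the sense in which this is a mixture (a \emph{signed} one when $\beta$ is non-integer, since individual $p_{j}$ may be negative).

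The step I expect to require the most care is the justification of the term-by-term manipulations, not the algebra. Interchanging the infinite sum with the implicit pointwise identity (and, for the mixture interpretation, with integration against test functions) must be legitimised; a quick alternative derivation that makes this transparent is simply to differentiate the already-proved representation $F(x)=\sum_{j=0}^{\infty}p_{j}[G(x)]^{\alpha+j}$, since $\frac{d}{dx}[G(x)]^{\alpha+j}=(\alpha+j)g(x)[G(x)]^{\alpha+j-1}$ by the chain rule reproduces the same series for $f$. In either route the interchange is valid because the series for $[1-G(x)]^{\beta-1}$ converges absolutely and uniformly on every compact subset of $[0,\infty)$, where $G(x)$ stays bounded away from $1$; this uniform convergence is what permits both the term-by-term differentiation and the term-by-term integration.
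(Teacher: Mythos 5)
Your proof is correct and follows exactly the route the paper intends (and leaves implicit): expand $[1-G(x)]^{\beta-1}$ via the series identity stated at the start of Section~3 and check that $w_{j}/B(\alpha,\beta)=p_{j}(\alpha+j)$ matches the GG density $(\alpha+j)g(x)[G(x)]^{\alpha+j-1}$. Your added verifications --- the coefficient identity via $B(\alpha,\beta)=\Gamma(\alpha)\Gamma(\beta)/\Gamma(\alpha+\beta)$, the normalization $\sum_{j}p_{j}=1$, and the remark that the mixture is signed for non-integer $\beta$ --- are all sound and go slightly beyond what the paper records.
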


\begin{proposition}
The cdf can be expressed in terms of the hypergeometric function and the incomplete beta
function ratio  
\citeaffixed{co-na-11}{see}
 in the following way:
$$
F(x)=\frac{(G(x))^{\alpha}}{\alpha B(\alpha,\beta)}\   _{2}F_{1}(\alpha,1-\beta;\alpha+1;G(x)),
$$
where $_{2}F_{1}(a,b;c;z)=\sum\limits_{k=0}^{\infty}\frac{((a)_{k}(b)_{k})}{((c)_{k}k!)} z^{k}$ and $(a)_{k}=a(a+1)...(a+k-1)$.
\end{proposition}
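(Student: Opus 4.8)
The plan is to reduce the claim to an identity for the incomplete beta function and then expand the integrand in a power series. Writing $y=G(x)$, note that by definition $F(x)=I_{G(x)}(\alpha,\beta)=B_{G(x)}(\alpha,\beta)/B(\alpha,\beta)$, so it suffices to show
$$
B_{y}(\alpha,\beta)=\int_{0}^{y}t^{\alpha-1}(1-t)^{\beta-1}\,dt=\frac{y^{\alpha}}{\alpha}\ _{2}F_{1}(\alpha,1-\beta;\alpha+1;y).
$$

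First I would expand $(1-t)^{\beta-1}$ using the generalized binomial series recalled at the start of this section. Since the integration variable satisfies $0\le t\le y=G(x)<1$ (the baseline cdf in (\ref{eq.G}) never attains $1$ for finite $x$), the expansion is valid throughout the interval, and rewriting the coefficients $w_{k}=(-1)^{k}\Gamma(\beta)/[\Gamma(\beta-k)\Gamma(k+1)]$ in terms of the Pochhammer symbol gives $w_{k}=(1-\beta)_{k}/k!$, so that
$$
(1-t)^{\beta-1}=\sum\limits_{k=0}^{\infty}\frac{(1-\beta)_{k}}{k!}\,t^{k}.
$$

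Next I would substitute this series into the integral and integrate term by term, the interchange of summation and integration being justified by the uniform convergence of the power series on the compact interval $[0,y]\subset(-1,1)$. This yields
$$
B_{y}(\alpha,\beta)=\sum\limits_{k=0}^{\infty}\frac{(1-\beta)_{k}}{k!}\int_{0}^{y}t^{\alpha+k-1}\,dt=\sum\limits_{k=0}^{\infty}\frac{(1-\beta)_{k}}{k!}\,\frac{y^{\alpha+k}}{\alpha+k}.
$$

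Finally I would factor out $y^{\alpha}/\alpha$ and reassemble the hypergeometric series. The only remaining algebraic step is the identity $\frac{\alpha}{\alpha+k}=\frac{(\alpha)_{k}}{(\alpha+1)_{k}}$, immediate from $(\alpha)_{k}=\alpha(\alpha+1)\cdots(\alpha+k-1)$ and $(\alpha+1)_{k}=(\alpha+1)\cdots(\alpha+k)$. Inserting it produces $\sum_{k}\frac{(\alpha)_{k}(1-\beta)_{k}}{(\alpha+1)_{k}\,k!}y^{k}$, which is precisely $_{2}F_{1}(\alpha,1-\beta;\alpha+1;y)$; dividing by $B(\alpha,\beta)$ and restoring $y=G(x)$ gives the stated formula. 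I expect the main obstacle to be purely bookkeeping — correctly matching the binomial coefficients to Pochhammer symbols and verifying the convergence that legitimizes the term-by-term integration — rather than any conceptual difficulty.
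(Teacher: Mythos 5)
Your proof is correct: rewriting the binomial coefficients as $w_k=(1-\beta)_k/k!$, integrating the series term by term (legitimate since $0\le t\le G(x)<1$ and the series converges uniformly on this compact interval), and applying $\frac{\alpha}{\alpha+k}=\frac{(\alpha)_k}{(\alpha+1)_k}$ yields exactly $B_{y}(\alpha,\beta)=\frac{y^{\alpha}}{\alpha}\,_{2}F_{1}(\alpha,1-\beta;\alpha+1;y)$, hence the stated formula after dividing by $B(\alpha,\beta)$. The paper gives no proof of this proposition at all—it merely cites \citeasnoun{co-na-11}—and your argument is precisely the standard derivation of that classical incomplete-beta/hypergeometric identity, so it matches the intended (implicit) route and is complete as written.
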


\begin{proposition}
The $k$th moment of BG  distribution can be expressed as a mixture of the $k$th moment of
GG distributions as follows:
\begin{eqnarray}
E(X^{k})
&=&\int_{0}^{\infty}x^{k}\sum\limits_{j=0}^{\infty}p_{j}(\alpha+j)g(x)[G(x)]^{\alpha+j-1}
=\sum\limits_{j=0}^{\infty}p_{j}E(X_{j}^{k}),
\end{eqnarray}
where
\begin{eqnarray*}
E[X_{j}^{k}]=u_{jk}\sum\limits_{i=0}^{\infty}\sum\limits_{r=0}^{\infty}\binom{\alpha+j-1}{i}\frac{(-1)^{i+r}}{\Gamma(r+1)}e^{\frac{\theta}{\gamma}(i+1)}[\frac{\theta}{\gamma}(i+1)]^{r}[\frac{-1}{\gamma(k+1)}]^{s
+1},
\end{eqnarray*}
$u_{jk}=(\alpha+j)\theta\Gamma(k+1)$ and  $g_{j}(x)$ is density function of a random variable $X_{j}$ which has a GG distribution
with parameters $\theta$, $\gamma$, and $\alpha+j$.
\end{proposition}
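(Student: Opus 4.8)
The plan is to deduce the result directly from Proposition 2 and then to evaluate the moment of a single $\mathrm{GG}$ component by two successive series expansions followed by one elementary integral. By Proposition 2 the BG density is the mixture $f(x)=\sum_{j=0}^{\infty}p_{j}g_{j}(x)$, where $g_{j}$ is the $\mathrm{GG}(\theta,\gamma,\alpha+j)$ density. Multiplying by $x^{k}$ and integrating term by term gives the first displayed equality $E(X^{k})=\sum_{j=0}^{\infty}p_{j}E(X_{j}^{k})$, so the whole problem reduces to evaluating $E(X_{j}^{k})=\int_{0}^{\infty}x^{k}g_{j}(x)\,dx$ in closed form.

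For the component moment I would start from
$$E(X_{j}^{k})=(\alpha+j)\theta\int_{0}^{\infty}x^{k}e^{\gamma x}e^{-\frac{\theta}{\gamma}(e^{\gamma x}-1)}\bigl[1-e^{-\frac{\theta}{\gamma}(e^{\gamma x}-1)}\bigr]^{\alpha+j-1}\,dx,$$
and expand the bracket with the generalized binomial series recalled at the start of this section, taking $z=e^{-\frac{\theta}{\gamma}(e^{\gamma x}-1)}$. This produces the factor $\binom{\alpha+j-1}{i}(-1)^{i}$ together with an extra exponential $e^{-\frac{\theta i}{\gamma}(e^{\gamma x}-1)}$, which I would merge with the existing one into $e^{-\frac{\theta(i+1)}{\gamma}(e^{\gamma x}-1)}=e^{\frac{\theta}{\gamma}(i+1)}e^{-\frac{\theta}{\gamma}(i+1)e^{\gamma x}}$; this accounts for the factor $e^{\frac{\theta}{\gamma}(i+1)}$ in the statement.

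Next I would expand the remaining double exponential $e^{-\frac{\theta}{\gamma}(i+1)e^{\gamma x}}=\sum_{r=0}^{\infty}\frac{(-1)^{r}}{\Gamma(r+1)}\bigl[\frac{\theta}{\gamma}(i+1)\bigr]^{r}e^{r\gamma x}$, which supplies the factors $\frac{(-1)^{r}}{\Gamma(r+1)}$ and $\bigl[\frac{\theta}{\gamma}(i+1)\bigr]^{r}$. Matching with the statement then identifies the leftover integral as the gamma integral $\int_{0}^{\infty}x^{k}e^{-\gamma(r+1)x}\,dx=\Gamma(k+1)/[\gamma(r+1)]^{k+1}$, obtained by the substitution $t=\gamma(r+1)x$. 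Collecting $(\alpha+j)\theta$, absorbing the $\Gamma(k+1)$ into $u_{jk}$, and retaining the factor $[\gamma(r+1)]^{-(k+1)}$, I recover the double series of the statement (so that the index in the final bracket is $r$ and the power is $k+1$).

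The binomial and exponential expansions and the gamma integral are routine; the genuine obstacle is the rigorous justification of interchanging the two infinite summations with the integral. This must be handled with care, because taken literally the termwise expansion of the double exponential produces per-term integrands $x^{k}e^{(r+1)\gamma x}$ whose integrals diverge, so only the full alternating series converges and the passage to the gamma integral above is a formal step. To make the derivation rigorous I would regroup the double series into an absolutely convergent form and apply Fubini or dominated convergence before integrating, or, following the standard practice for beta-generated families, present the expression as a formal power-series representation of the moment. This convergence bookkeeping is the only delicate point; everything else is direct computation.
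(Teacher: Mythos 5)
Your derivation retraces the paper's own route exactly: the paper offers no argument beyond the displayed chain, and its component-moment formula is the one obtained (as in El-Gohary et al.\ 2013 for the GG law) by precisely your steps---binomial expansion in $i$, merging of exponentials to produce $e^{\frac{\theta}{\gamma}(i+1)}$, expansion of $e^{-\frac{\theta(i+1)}{\gamma}e^{\gamma x}}$ in $r$, and a termwise gamma-type integral. You also correctly repair the misprint: comparison with the order-statistics subsection shows the last bracket must read $\bigl[\frac{-1}{\gamma(r+1)}\bigr]^{k+1}$. The mixture step $E(X^{k})=\sum_{j\geq0}p_{j}E(X_{j}^{k})$ is sound. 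The gap is your closing claim that what remains is only ``convergence bookkeeping'' fixable by regrouping and Fubini. It is not fixable: the interchange fails in substance, and the displayed series for $E[X_{j}^{k}]$ is not equal to the moment.

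To see this concretely, take $k=1$ and $\alpha+j=1$, so $X_{j}$ is Gompertz$(\theta,\gamma)$, and put $b=\theta/\gamma$. Substituting $u=e^{\gamma x}$ in $\int_{0}^{\infty}e^{-b(e^{\gamma x}-1)}dx$ gives the true mean $E(X_{j})=\frac{e^{b}}{\gamma}E_{1}(b)$, with $E_{1}$ the exponential integral. In the stated series only $i=0$ survives and it reduces to $\frac{\theta e^{b}}{\gamma^{2}}\sum_{r\geq0}\frac{(-b)^{r}}{r!\,(r+1)^{2}}$, which converges absolutely and, using $\sum_{m\geq1}\frac{(-b)^{m}}{m\cdot m!}=-\bigl(\gamma_{E}+\log b+E_{1}(b)\bigr)$ (with $\gamma_{E}$ Euler's constant), equals $\frac{e^{b}}{\gamma}\bigl(\gamma_{E}+\log b+E_{1}(b)\bigr)$: at $\theta=\gamma=1$ this is about $2.165$ against the true value $e\,E_{1}(1)\approx0.596$. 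Your own sign trouble is a symptom of the same disease: the expansion actually leaves $\int_{0}^{\infty}x^{k}e^{+(r+1)\gamma x}dx$, whose assigned value $\Gamma(k+1)\bigl[\frac{-1}{\gamma(r+1)}\bigr]^{k+1}$ is an analytic continuation from negative exponents, while your surrogate integral with $e^{-\gamma(r+1)x}$ differs from it by $(-1)^{k+1}$; for $k=1$ the two coincide, so the counterexample disposes of both versions. Because $\sum_{r}\frac{c^{r}}{r!}\int_{0}^{\infty}x^{k}e^{(r+1)\gamma x}dx=\infty$, Tonelli's hypothesis fails outright, and no regrouping or dominated-convergence argument can equate the integral with a sum of continuations of divergent integrals---the absolutely convergent double series above demonstrably converges to the wrong number. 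A correct closed form comes instead from $u=e^{\gamma x}$: $\int_{0}^{\infty}x^{k}e^{\gamma x}e^{-c(e^{\gamma x}-1)}dx=\frac{e^{c}}{\gamma^{k+1}}\int_{1}^{\infty}(\log u)^{k}e^{-cu}du=\frac{e^{c}}{\gamma^{k+1}}\frac{\partial^{k}}{\partial a^{k}}\bigl[c^{-a}\Gamma(a,c)\bigr]_{a=1}$, i.e.\ derivatives of the upper incomplete gamma function, which is not the elementary double series of the proposition. In short, you reconstructed the paper's derivation faithfully and diagnosed its delicacy more honestly than the paper does, but the final step cannot be made rigorous as you propose, because the identity being proved is false as stated.
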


\begin{proposition}
The moment generating function of  BG  distribution can be expressed as a mixture of  moment  generating function of
GG distributions as follows:
\begin{eqnarray}
M_{X}(t)
&=&\int_{0}^{\infty}e^{tx}\sum\limits_{j=0}^{\infty}p_{j}(\alpha+j)g(x)[G(x)]^{\alpha+j-1}
=\sum\limits_{j=0}^{\infty}p_{j}M_{X_{j}}(t),
\end{eqnarray}
where
\begin{eqnarray*}
M_{X_{j}}(t)=\frac{(\alpha+j)\theta}{\gamma}\sum\limits_{i=0}^{\infty}\sum\limits_{k=0}^{\infty}(-1)^{i}\binom{\alpha+j-1}{i}\binom{\frac{t}
{\gamma}}{k}\frac{\Gamma(k+1)}{[\frac{(i+1)\theta}{\gamma}]^{k+1}},
\end{eqnarray*}
and  $g_{j}(x)$ is density function of a random variable $X_{j}$ which has a GG distribution
with parameters $\theta$, $\gamma$, and $\alpha+j$.
\end{proposition}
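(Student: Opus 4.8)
The plan is to reduce everything to the mixture representation already established in Proposition 2 and then to compute the moment generating function of a single GG component by two successive binomial expansions. By Proposition 2 we may write $f(x)=\sum_{j=0}^\infty p_j g_j(x)$, where $g_j$ is the density of $X_j\sim\mathrm{GG}(\theta,\gamma,\alpha+j)$. Multiplying by $e^{tx}$, integrating over $(0,\infty)$, and interchanging summation and integration gives directly
$$M_X(t)=\int_0^\infty e^{tx}\sum_{j=0}^\infty p_j g_j(x)\,dx=\sum_{j=0}^\infty p_j\int_0^\infty e^{tx}g_j(x)\,dx=\sum_{j=0}^\infty p_j M_{X_j}(t),$$
which is the first claimed identity. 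The only substantive content is therefore the closed form of $M_{X_j}(t)$.

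To obtain it, I would insert the explicit GG density $g_j(x)=(\alpha+j)\theta e^{\gamma x}e^{-\frac{\theta}{\gamma}(e^{\gamma x}-1)}\bigl[1-e^{-\frac{\theta}{\gamma}(e^{\gamma x}-1)}\bigr]^{\alpha+j-1}$ into $\int_0^\infty e^{tx}g_j(x)\,dx$ and expand the bracketed factor using the binomial series recalled at the start of this section, namely
$$\bigl[1-e^{-\frac{\theta}{\gamma}(e^{\gamma x}-1)}\bigr]^{\alpha+j-1}=\sum_{i=0}^\infty(-1)^i\binom{\alpha+j-1}{i}e^{-\frac{i\theta}{\gamma}(e^{\gamma x}-1)}.$$
After interchanging this sum with the integral, each term leaves the elementary integral $\int_0^\infty e^{(t+\gamma)x}e^{-\frac{(i+1)\theta}{\gamma}(e^{\gamma x}-1)}\,dx$, with the overall prefactor $(\alpha+j)\theta$ and the alternating weights $(-1)^i\binom{\alpha+j-1}{i}$ carried along.

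The key step is to evaluate this integral by the substitution $w=e^{\gamma x}-1$, under which $dx=\frac{dw}{\gamma(1+w)}$ and $e^{(t+\gamma)x}=(1+w)^{t/\gamma+1}$, so that it becomes $\frac{1}{\gamma}\int_0^\infty(1+w)^{t/\gamma}e^{-\frac{(i+1)\theta}{\gamma}w}\,dw$. Expanding $(1+w)^{t/\gamma}=\sum_{k=0}^\infty\binom{t/\gamma}{k}w^k$ and using $\int_0^\infty w^k e^{-aw}\,dw=\Gamma(k+1)/a^{k+1}$ with $a=\frac{(i+1)\theta}{\gamma}$ yields exactly $\frac{1}{\gamma}\sum_{k=0}^\infty\binom{t/\gamma}{k}\frac{\Gamma(k+1)}{[(i+1)\theta/\gamma]^{k+1}}$; restoring the prefactor $(\alpha+j)\theta$ and the $i$-sum then produces the stated double-series expression for $M_{X_j}(t)$.

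I expect the main obstacle to be not the algebra but the justification of the term-by-term integrations. In particular the expansion $(1+w)^{t/\gamma}=\sum_k\binom{t/\gamma}{k}w^k$ converges only for $|w|<1$, whereas the integration runs over all $w>0$, so the interchange is formal; a rigorous treatment would either restrict $t$ to a neighborhood of $0$ and appeal to analyticity of $M_{X_j}$ together with dominated convergence, or recognize $\int_0^\infty(1+w)^{t/\gamma}e^{-aw}\,dw$ as a confluent hypergeometric (Tricomi) function and expand that instead. The interchange of the outer $j$-sum with the integral is handled similarly by dominated convergence, using the summability of the representation in Proposition 2.
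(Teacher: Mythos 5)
Your proof is correct and is exactly the (unwritten) derivation behind the paper's statement: the paper itself only displays the mixture step $M_X(t)=\sum_j p_j M_{X_j}(t)$ from Proposition 2 and quotes the closed form of $M_{X_j}(t)$, which your computation --- binomial expansion of $\bigl[1-e^{-\frac{\theta}{\gamma}(e^{\gamma x}-1)}\bigr]^{\alpha+j-1}$, substitution $w=e^{\gamma x}-1$, and expansion of $(1+w)^{t/\gamma}$ --- reproduces term for term. Your closing caveat is also well taken, and in fact sharper than you state: since $\binom{t/\gamma}{k}\Gamma(k+1)=\frac{t}{\gamma}\bigl(\frac{t}{\gamma}-1\bigr)\cdots\bigl(\frac{t}{\gamma}-k+1\bigr)$ grows factorially in $k$, the inner $k$-series is the divergent asymptotic expansion of $e^{a}a^{-t/\gamma-1}\Gamma\bigl(\frac{t}{\gamma}+1,a\bigr)$ with $a=\frac{(i+1)\theta}{\gamma}$ (terminating only when $t/\gamma$ is a non-negative integer), so restricting $t$ to a neighborhood of $0$ cannot rescue the interchange, and the stated closed form --- in the paper as in your derivation --- is valid only as a formal or asymptotic identity, your Tricomi-function alternative being the rigorous substitute.
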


\subsection{Order statistics}
Moments of order statistics play an important role in quality control testing and reliability. For example, if the reliability
of an item is high, the duration of an "all items fail" life–test can be too expensive in both time and money.
 Therefore, a practitioner needs to predict the failure of future items based on the times of
a few early failures. These predictions are often based on moments of order statistics.

Let $X_{1},X_{2},...,X_{n}$  be a random sample of size $n$ from $BG(\theta,\gamma,\alpha,\beta)$. Then the pdf and cdf of the $i$th order
statistic, say $X_{i:n}$, are given by
\begin{eqnarray}\label{eq.fin}
f_{i:n}(x)
&=&\frac{1}{B(i,n-i+1)}\sum\limits_{m=0}^{n-i}(-1)^{m}\binom{n-i}{m}f(x)F^{i+m-1}(x),
\end{eqnarray}
and
\begin{eqnarray}\label{eq.Fin}
F_{i:n}(x)=\int_{0}^{x}f_{i:n}(t)dt=\frac{1}{B(i,n-i+1)}
\sum\limits_{m=0}^{n-i}\frac{(-1)^{m}}{m+i}\binom{n-i}{m}F^{i+m}(x),
\end{eqnarray}
respectively, where  $F^{i+m}(x)=(\sum\limits_{r=0}^{\infty}b_{r}G(x)^{r})^{i+m}$.
Here and henceforth, we use an equation by 
\citeasnoun{gr-ry-07}, page 17, for a power series raised to a positive
integer $n$
\begin{eqnarray}\label{eq.bc}
\left(\sum_{r=0}^\infty b_r\,u^r\right)^n=\sum_{r=0}^\infty
c_{n,r}\,u^r,
\end{eqnarray}where the coefficients $c_{n,r}$ (for
$r=1,2,\ldots$) are easily determined from the recurrence
equation
\begin{equation}\label{eq.cn}
c_{n,r}=(r\,b_0)^{-1}\sum_{m=1}^{r}\,[m\,(n+1)-r]\,b_m\,c_{n,r-m},
\end{equation} where $c_{n,0}=b_0^n$.
 The coefficient $c_{n,r}$ can be calculated from $c_{n,0},\ldots,c_{n,r-1}$ and hence from the quantities $b_0,\ldots,b_{r}$.
 The equations (\ref{eq.fin}) and (\ref{eq.Fin}) can be written as
$$
f_{i:n}(x)=\frac{1}{B(i,n-i+1)}\sum\limits_{m=0}^{n-i}\sum\limits_{r=1}^{\infty}\frac{1}{m+i}(-1)^{m}rc_{i+m,r}g(x)G^{r-1}(x),
$$
and
$$
F_{i:n}(x)=\frac{1}{B(i,n-i+1)}\sum\limits_{m=0}^{n-i}\sum\limits_{r=0}^{\infty}\frac{1}{m+i}(-1)^{m}c_{i+m,r}G^{r}(x).
$$
Therefore,
the $s$th moments of $X_{i:n}$ follows as
\begin{eqnarray*}
E[X_{i:n}^{s}]&=&\frac{1}{B(i,n-i+1)}\sum\limits_{m=0}^{n-i}\sum\limits_{r=1}^{\infty}\frac{1}{m+i}(-1)^{m}rc_{i+m,r}\int_{0}^{+\infty}t^{s}g(t)G^{r-1}(t)dt \nonumber\\
&=&\frac{1}{B(i,n-i+1)}\sum\limits_{m=0}^{n-i}\sum\limits_{r=1}^{\infty}\frac{1}{m+i}(-1)^{m}rc_{i+m,r}\nonumber\\
&\times&
\theta
\Gamma(s+1)\sum\limits_{i_{1}=0}^{\infty}\sum\limits_{i_{2}=0}^{\infty}\binom{r-1}{i_{1}}\frac{(-1)^{i_{1}+i_{2}}}{\Gamma(i_{2}+1)}
e^{\frac{\theta}{\gamma}(i_{1}+1)}[\frac{\theta(i_{1}+1)}{\gamma}]^{i_{2}}[\frac{-1}{\gamma(i_{2}+1)}]^{s+1}.
\end{eqnarray*}

\subsection{Quantile measure }
The quantile function of BG distribution is given by  $$Q(u)=\frac{1}{\gamma}\log(1-\frac{\gamma}{\theta}\log(1-Q_{\alpha,\beta}(u))),$$
where $Q_{\alpha,\beta}(u)$ is the $u$th quantile of beta distribution with parameters $\alpha$ and $\beta$. The effects of the shape parameters $\alpha$ and $\beta$ on
the skewness and kurtosis can be considered based on quantile measures. 
 The Bowley skewness 
\cite{ke-ke-62}
 is one of the earliest skewness measures defined by
$$
{\mathcal{B}}=\frac{Q(\frac{3}{4})+Q(\frac{1}{4})-2Q(\frac{1}{2})}{Q(\frac{3}{4})-Q(\frac{1}{4})}.
$$
Since only the middle two quartiles are considered and the outer
two quartiles are ignored, this adds robustness to the measure.
The Moors kurtosis \cite{moors-88}
is defined as
$$
{\mathcal{M}}=\frac{Q(\frac{3}{8})-Q(\frac{1}{8})+Q(\frac{7}{8})-Q(\frac{5}{8})}{Q(\frac{6}{8})-Q(\frac{2}{8})}.
$$
Clearly, ${\mathcal{M}}> 0$ and there is good concordance with the classical kurtosis measures for some distributions.  These measures are less sensitive to outliers and they exist even for distributions without moments. For the standard normal distribution, these measures are 0 (Bowley) and 1.2331 (Moors).

In Figures \ref{plot.bm1} and \ref{plot.bm2}, we plot the measures $\mathcal{B}$ and $\mathcal{M}$ for some parameter values. These plots indicate that both
measures $\mathcal{B}$ and $\mathcal{M}$ depend on all shape parameters.

\begin{figure}
\centering
\includegraphics[scale=0.42]{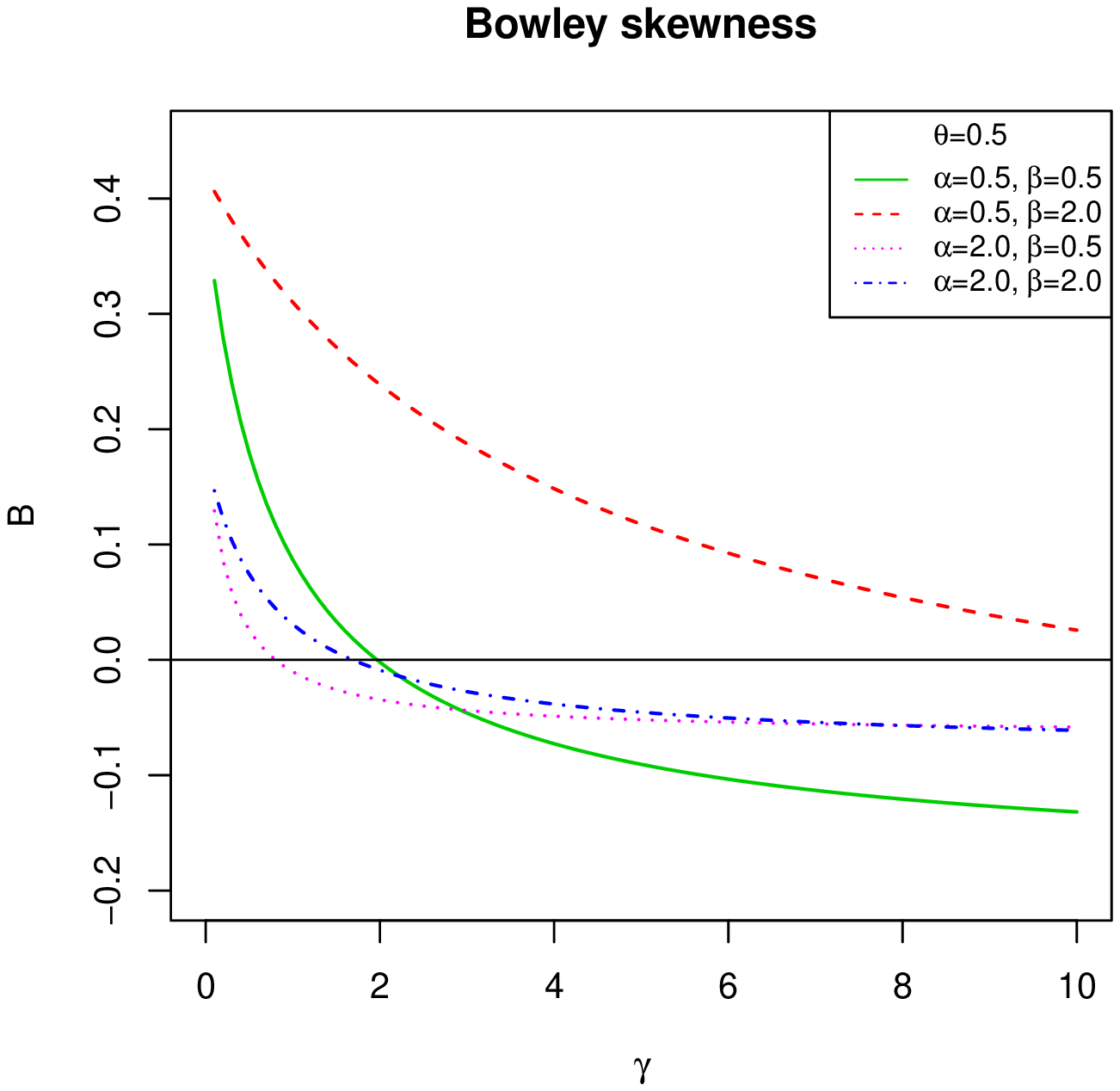}
\includegraphics[scale=0.42]{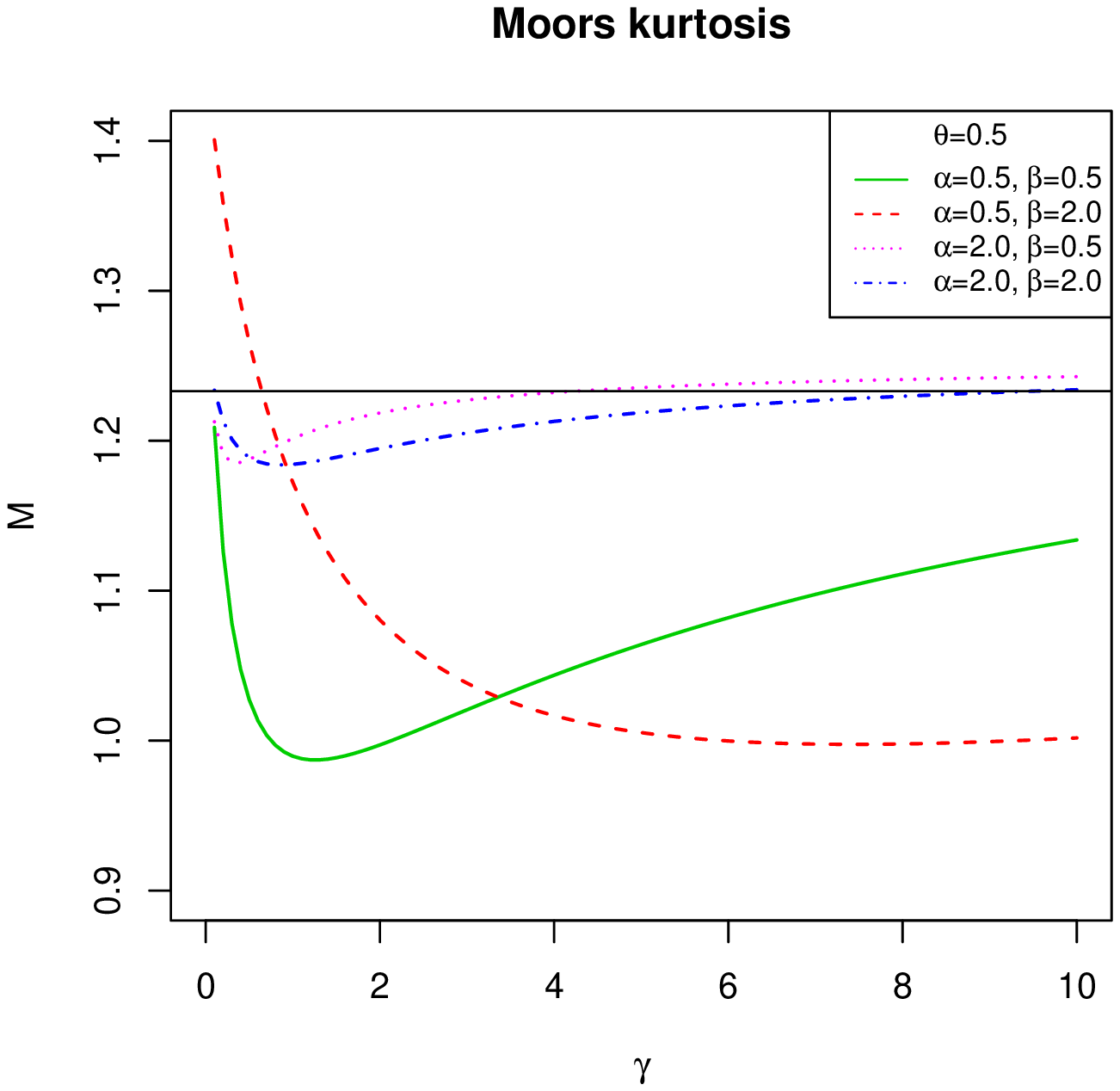}
\caption[]{The Bowley skewness (left) and Moors kurtosis (right) coefficients for the BG distribution as a function of $\gamma$.} \label{plot.bm1}
\end{figure}

\begin{figure}
\centering
\includegraphics[scale=0.42]{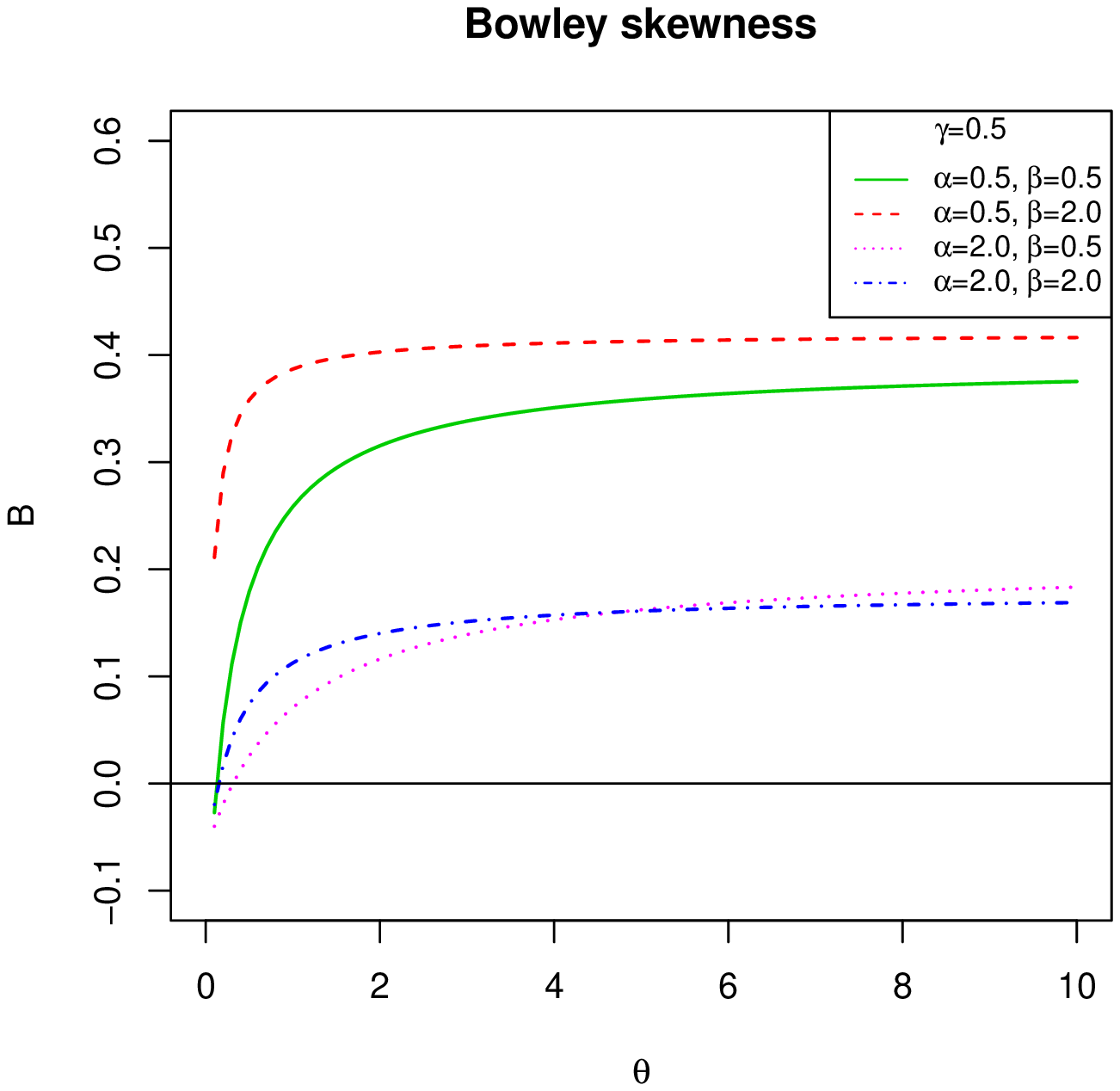}
\includegraphics[scale=0.42]{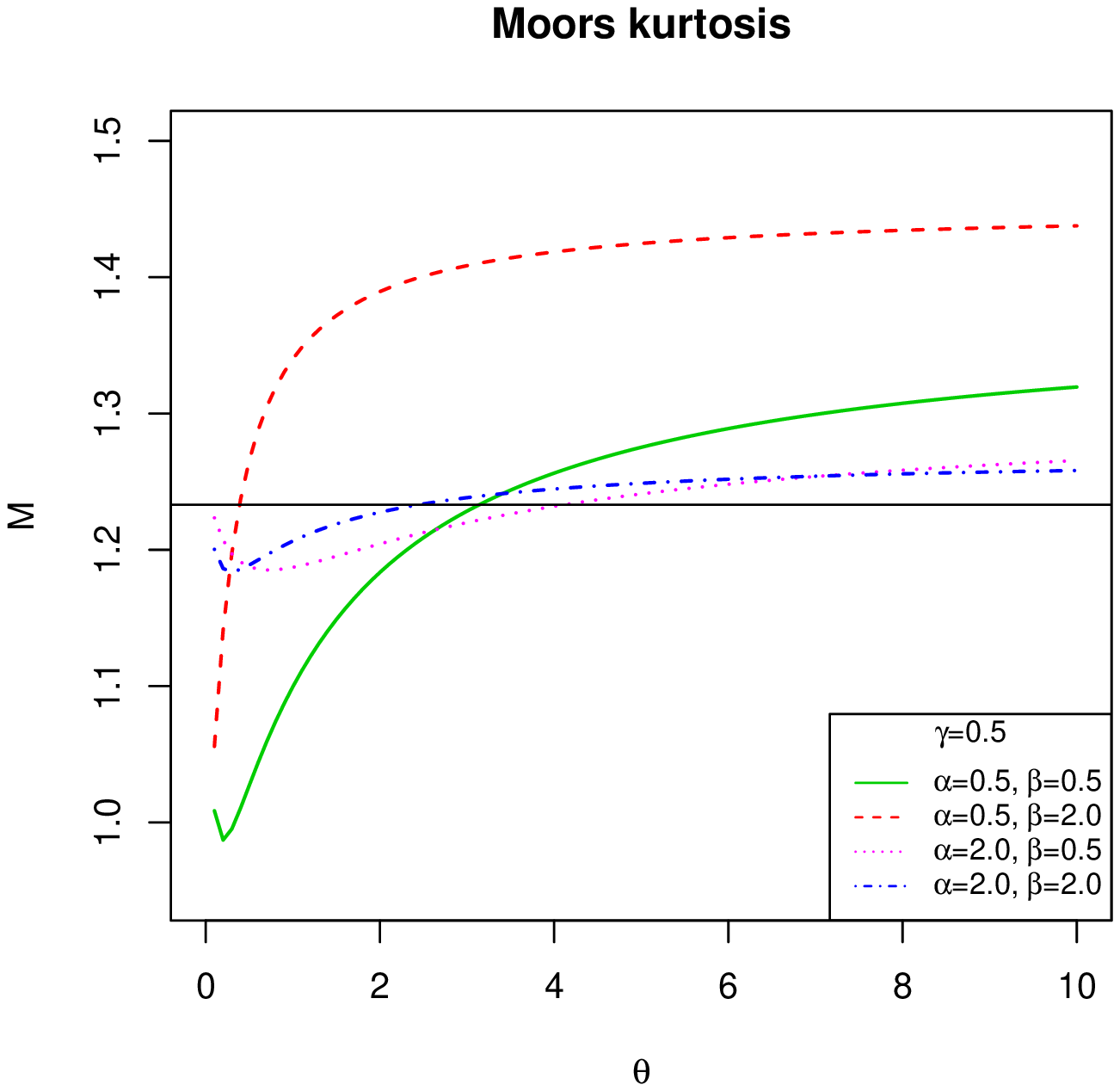}
\caption[]{The Bowley skewness (left) and Moors kurtosis (right) coefficients for the BG distribution as a function of $\theta$.} \label{plot.bm2}
\end{figure}

\subsection{Shannon and R\'{e}nyi entropy }
 If $X$ is a none-negative continuous random variable with pdf $f(x)$, then Shannon's entropy of $X$ is defined  by \citeasnoun{shan-48}
 as
\begin{equation*}
H(f)=E[-\log f(X)]=-\int_{0}^{+\infty} f(x)\log (f(x))dx,
\end{equation*}
and this  is usually referred to as the continuous entropy (or differential entropy). An explicit expression of
Shannon entropy for BG  distribution is obtained as
 \begin{eqnarray}
 H(f)&=&\log(\frac{B(\alpha,\beta)}{\theta})-\frac{\theta \beta}{\gamma}-\gamma E(X)\nonumber\\
 &&+\frac{\theta \beta}{\gamma}M_{X}(\gamma)+(\alpha-1)[\psi(\alpha+\beta)-\psi(\alpha)],
  \end{eqnarray}
  where $\psi(.)$  is a digamma function.

The R\'{e}nyi entropy of order $\lambda$ is defined as
\begin{eqnarray}
H_{\lambda}(f)=\frac{1}{1-\lambda}\log\int_{-\infty}^{+\infty} f^{\lambda}(x)dx, \;\;\;\; \forall \lambda>0 \ \ (\lambda \neq1),
\end{eqnarray}
where
$$ H(X)={\mathop{\lim}_{\lambda\rightarrow1}H_{\lambda}(X)}=-\int_{-\infty}^{+\infty} f(x)\log f(x)dx,$$
is the Shannon entropy, if both integrals exist.  Finally, an explicit expression of R\'{e}nyi  entropy for BG distribution is obtained as
\begin{eqnarray}
H_{\lambda}(f)&=&-\log({\theta})+\frac{\lambda}{\lambda-1}\log(B(\alpha,\beta))+\frac{1}{1-\lambda}[\log(B(\alpha,(\beta-1)\lambda+1))\nonumber\\
&&+
\log(\sum\limits_{j=1}^{\infty}\sum\limits_{k=0}^{j}(-1)^{k}\binom{\lambda-1}{j}\binom{j}{k}
(\frac{\gamma}{\theta})^{j}\frac{\Gamma(j+1)}{(j+1)^{k-1+(\beta-1)\lambda}})].
\end{eqnarray}

\section{Estimation and inference}
\label{sec.es}
In this section, we determine the maximum-likelihood estimates (MLEs) of the parameters of the BG distribution from a complete sample.
Consider $X_{1},...,X_{n}$ is a random sample from BG distribution. The log-likelihood function
for the vector of parameters $\Theta=(\theta,\gamma,\alpha,\beta)$ can be written as
\begin{eqnarray}\label{eq.lik}
l_{n}&=&l_{n}(\Theta)\\
&=&
n\log(\theta)-n\log(B(\alpha,\beta))+n\gamma\bar{x}-
\beta\theta\sum\limits_{i=1}^{n}\log(t_{i})
+(\alpha-1)\sum\limits_{i=1}^{n}\log(1-t_{i}^\theta),\nonumber
\end{eqnarray}
where $\bar{x}=n^{-1}\sum\limits_{i=1}^{n} x_{i}$ and $t_{i}=e^{\frac{-1}{\gamma}(e^{\gamma x_{i}}-1)}$.
The log-likelihood can be maximized either directly or by solving the nonlinear likelihood equations obtained by differentiating (\ref{eq.lik}). The components of the score
vector $U(\Theta)$ are given by
\begin{eqnarray*}
&&U_{\alpha}(\Theta)=\frac{\partial l_{n}}{\partial \alpha}=n\psi(\alpha+\beta)-n\psi(\alpha)+\sum\limits_{i=1}^{n}\log(1-t_{i}^\theta),\\
&&U_{\beta}(\Theta)=\frac{\partial l_{n}}{\partial \beta}=n\psi(\alpha+\beta)-n\psi(\beta)-\theta\sum\limits_{i=1}^{n}\log(t_{i}),\\
&&U_{\theta}(\Theta)=\frac{\partial l_{n}}{\partial \theta}=\frac{n}{\theta}-
\beta\sum\limits_{i=1}^{n}\log(t_i)-
(\alpha-1)\sum\limits_{i=1}^{n}\frac{t_{i}^\theta\log(t_i)}{1-t_{i}^\theta},\\
&&U_{\gamma}(\Theta)=\frac{\partial l_{n}}{\partial \gamma}=n\bar{x}-\beta\theta\sum\limits_{i=1}^{n}d_i
-\theta(\alpha-1)\sum\limits_{i=1}^{n}\frac{d_it_{i}^{\theta}}{1-t_{i}^\theta},
\end{eqnarray*}
where $\psi(.)$ is the digamma function, 
and $d_i=\frac{\partial \log(t_{i})}{\partial \gamma}=\frac{1}{\gamma}(-\log(t_i)+\gamma x_i\log(t_i)-x_i)$.

For interval estimation and hypothesis tests on the model parameters, we require the observed
information matrix. The $4\times4$ unit observed information matrix $J = J_{n}(\Theta)$ is obtained as
\[J=-\left[ \begin{array}{cccc}
J_{\alpha \alpha }& J_{\alpha \beta } & J_{\alpha \theta } & J_{\alpha \gamma } \\
J_{\alpha\beta}   & J_{\beta \beta }  & J_{\beta  \theta } & J_{\beta  \gamma } \\
J_{\alpha \theta} & J_{\beta \theta } & J_{\theta \theta } & J_{\theta \gamma } \\
J_{\alpha \gamma} & J_{\beta \gamma } & J_{\theta \gamma } & J_{\gamma \gamma }
\end{array} \right],\]
where the expressions for the elements of $J$ are
 \begin{align*}
&J_{\alpha\alpha}= \frac{\partial^{2}l_{n}}{\partial \alpha^{2}}=n\psi'(\alpha+\beta)-n\psi'(\alpha), \ \ \ \ \ \ \
 J_{\alpha\beta}= \frac{\partial^{2}l_{n}}{\partial \alpha \partial \beta }=\frac{\partial^{2}l_{n}}{\partial \beta \partial\alpha  }=n\psi'(\alpha+\beta),
 \\&
 J_{\alpha\theta}= \frac{\partial^{2}l_{n}}{\partial \alpha \partial \theta }=\frac{\partial^{2}l_{n}}{\partial \theta \partial\alpha}=\sum\limits_{i=1}^{n}\frac{t_{i}^\theta\log(t_i)}{1-t_{i}^\theta},
 \ \ \ \ \
 J_{\alpha\gamma}= \frac{\partial^{2}l_{n}}{\partial \alpha \partial \gamma }=\frac{\partial^{2}l_{n}}{\partial \gamma \partial\alpha}=-\theta\sum\limits_{i=1}^{n}\frac{d_it_{i}^{\theta}}{1-t_{i}^\theta},\\
 &J_{\beta\beta}= \frac{\partial^{2}l_{n}}{\partial \beta^{2}}=n\psi'(\alpha+\beta)-n\psi'(\beta),
 \ \ \ \ \ \ \
J_{\beta\theta}= \frac{\partial^{2}l_{n}}{\partial \beta \partial \theta }=\frac{\partial^{2}l_{n}}{\partial \theta \partial\beta}=-\sum\limits_{i=1}^{n}\log(t_{i}),\\
&J_{\beta\gamma}= \frac{\partial^{2}l_{n}}{\partial \beta \partial \gamma }=\frac{\partial^{2}l_{n}}{\partial \gamma \partial\beta}=
-\theta\sum\limits_{i=1}^{n}d_i,
\ \ \
J_{\theta\theta}= \frac{\partial^{2}l_{n}}{\partial \theta^{2}}=-\frac{n}{\theta^{2}}+\theta(\alpha-1)\sum\limits_{i=1}^{n}\frac{t_{i}^{\theta}(\log(t_i))^2}{(1-t_{i}^\theta)^2},
\\
&
J_{\theta\gamma}= \frac{\partial^{2}l_{n}}{\partial \theta \partial \gamma }=\frac{\partial^{2}l_{n}}{\partial \gamma \partial\theta}=
-\beta\sum\limits_{i=1}^{n}d_i
-(\alpha-1)\sum\limits_{i=1}^{n}\frac{d_it_{i}^{\theta}}{1-t_{i}^\theta}\left(\theta\log(t_i)+1+\frac{\theta t_i^\theta \log(t_i)}{1-t_{i}^\theta}\right),
\\
&
J_{\gamma\gamma}=\frac{\partial^{2}l_{n}}{\partial \gamma^2 }=-\beta\theta\sum\limits_{i=1}^{n}q_i
-\theta(\alpha-1)\sum\limits_{i=1}^{n}\frac{t_{i}^{\theta}}{1-t_{i}^\theta}(q_i+\theta d_i^2)-\theta^2(\alpha-1)\sum\limits_{i=1}^{n}\frac{d_i^2t_{i}^{2\theta}}{1-t_{i}^{\theta}},
 \end{align*}
where
$q_i=\frac{\partial d_i}{\partial \gamma}=d_i(x_i-\frac{2}{\gamma})+\frac{x_i}{\gamma}\log(t_i)$.

\section{Simulation studies}
\label{sec.si}
In this section, we performed a simulation study in order to investigate the proposed estimator of parameters based on the proposed MLE method. We generate 10,000 data set with size $n$ from the BG distribution with parameters $a$, $b$, $\theta$, and $\gamma$, and compute the MLE's of the parameters.  We assess the accuracy of the approximation of the standard error of the MLE's determined though the Fisher information matrix and variance of the estimated parameters.
Table \ref{table.sim} show the results for the BG distribution. From these results, we can conclude that:
\begin{enumerate}
  \item[i.] the differences between the average estimates and the true values are almost small,
  \item[ii.]  the MLE's converge to true value in all cases when the sample size increases,
  \item[iii.] the standard errors of the MLEs decrease when the sample size increases.
\end{enumerate}
From these simulation, we can conclude that estimation of parameters using the MLE are satisfactory.

\begin{sidewaystable}

\caption{The simulated MLE's and mean of the standard errors for BG
distribution based on information matrix and variance of MLE's.}\label{table.sim}
{\small
\begin{center}
\begin{tabular}{|c|cccc|cccc|cccc|cccc|} \hline
   &\multicolumn{4}{|c|}{parameters}&\multicolumn{4}{|c|}{estimations}&\multicolumn{4}{|c|}{simulated}&\multicolumn{4}{|c|}{Information matrix}\\ \hline
   $n$& $\alpha$ &$\beta$&$\theta$ &$\gamma$ &$\hat\alpha$ &$\hat\beta$&$\hat\theta$ &$\hat\gamma$&
$s.e.(\hat\alpha)$ &$s.e.(\hat\beta)$&$s.e.(\hat\theta)$ &$s.e.(\hat\gamma)$&
$s.e.(\hat\alpha)$ &$s.e.(\hat\beta)$&$s.e.(\hat\theta)$ &$s.e.(\hat\gamma)$ \\  \hline
30 &0.5 & 0.5 & 0.5 & 0.5 & 0.515 & 0.739 & 0.555 & 0.616 & 0.171 & 0.646 & 0.818 & 0.291 & 0.111 & 0.183 & 0.117 & 0.072 \\
   &0.5 & 0.5 & 0.5 & 1.0 & 0.516 & 0.734 & 0.605 & 1.164 & 0.180 & 0.609 & 0.800 & 0.472 & 0.123 & 0.168 & 0.137 & 0.089 \\
   &0.5 & 0.5 & 1.0 & 1.0 & 0.517 & 0.858 & 1.195 & 1.245 & 0.166 & 0.604 & 0.845 & 0.583 & 0.117 & 0.191 & 0.154 & 0.132 \\
   &0.5 & 2.0 & 0.5 & 1.0 & 0.509 & 1.741 & 0.902 & 1.368 & 0.144 & 0.824 & 0.388 & 0.839 & 0.111 & 0.306 & 0.129 & 0.222 \\
   &0.5 & 2.0 & 1.0 & 1.0 & 0.497 & 1.830 & 1.441 & 1.478 & 0.130 & 0.877 & 0.575 & 1.100 & 0.105 & 0.354 & 0.217 & 0.352 \\
   &2.0 & 2.0 & 0.5 & 0.5 & 2.068 & 1.916 & 0.811 & 0.826 & 0.972 & 0.744 & 0.881 & 0.544 & 0.333 & 0.147 & 0.217 & 0.094 \\
\hline

50 &0.5 & 0.5 & 0.5 & 0.5 & 0.506 & 0.612 & 0.526 & 0.567 & 0.125 & 0.600 & 0.900 & 0.214 & 0.088 & 0.143 & 0.111 & 0.054 \\
   &0.5 & 0.5 & 0.5 & 1.0 & 0.509 & 0.729 & 0.556 & 1.092 & 0.131 & 0.633 & 0.864 & 0.342 & 0.085 & 0.137 & 0.096 & 0.077 \\
   &0.5 & 0.5 & 1.0 & 1.0 & 0.512 & 0.829 & 1.104 & 1.149 & 0.126 & 0.598 & 0.941 & 0.421 & 0.086 & 0.147 & 0.123 & 0.104 \\
   &0.5 & 2.0 & 0.5 & 1.0 & 0.507 & 1.896 & 0.888 & 1.183 & 0.111 & 0.795 & 0.345 & 0.631 & 0.081 & 0.244 & 0.082 & 0.179 \\
   &0.5 & 2.0 & 1.0 & 1.0 & 0.501 & 1.932 & 1.202 & 1.202 & 0.101 & 0.856 & 0.515 & 0.873 & 0.080 & 0.288 & 0.147 & 0.282 \\
   &2.0 & 2.0 & 0.5 & 0.5 & 2.201 & 1.991 & 0.678 & 0.659 & 0.845 & 0.734 & 0.856 & 0.350 & 0.372 & 0.114 & 0.150 & 0.060 \\
\hline

100&0.5 & 0.5 & 0.5 & 0.5 & 0.498 & 0.520 & 0.511 & 0.532 & 0.086 & 0.617 & 0.948 & 0.149 & 0.060 & 0.101 & 0.076 & 0.039 \\
   &0.5 & 0.5 & 0.5 & 1.0 & 0.498 & 0.531 & 0.537 & 1.039 & 0.090 & 0.655 & 0.917 & 0.241 & 0.061 & 0.098 & 0.073 & 0.054 \\
   &0.5 & 0.5 & 1.0 & 1.0 & 0.503 & 0.540 & 0.984 & 1.088 & 0.086 & 0.634 & 1.033 & 0.295 & 0.059 & 0.107 & 0.085 & 0.076 \\
   &0.5 & 2.0 & 0.5 & 1.0 & 0.504 & 1.929 & 0.492 & 1.062 & 0.076 & 0.722 & 0.349 & 0.425 & 0.055 & 0.172 & 0.046 & 0.133 \\
   &0.5 & 2.0 & 1.0 & 1.0 & 0.502 & 1.917 & 1.259 & 1.163 & 0.072 & 0.814 & 0.479 & 0.608 & 0.055 & 0.212 & 0.084 & 0.209 \\
   &2.0 & 2.0 & 0.5 & 0.5 & 2.195 & 2.062 & 0.550 & 0.590 & 0.648 & 0.738 & 0.791 & 0.229 & 0.261 & 0.083 & 0.060 & 0.044 \\
\hline
\end{tabular}
\end{center}
}

\end{sidewaystable}

\section{Application of BG to real data set}
\label{sec.ex}

In this section, we perform an application to real data  and demonstrate the superiority of BG distribution as compared to some of
its sub-models.  The data have been obtained from \citeasnoun{Aarset-87}, 
and  widely reported in some literatures \citeaffixed{si-or-co-10}{for example see}. It represents the lifetimes of 50 devices, and also, possess a bathtub-shaped failure rate property. The  numerical evaluations were implemented using R software (nlminb function).



Based on some goodness-of-fit measures, the performace of the BG distribution is quantified and compared with others due to  five literature distributions:  E, GE, BE, G, and GG,  distributions. The MLE's of the unknown parameters (standard errors in parentheses) for these distributions are given in Table \ref{table.EX}. Also, the values of the log-likelihood functions ($-\log(L)$), the  Kolmogorov–Smirnov (K–S) test statistic with its p-value,  the statistics AIC (Akaike Information Criterion), the statistics AICC (Akaike Information Criterion with correction) and BIC (Bayesian Information Criterion) are calculated for the six distributions in order  to verify which distribution fits better to these data. All the computations were done using the R software.

The BG distribution yields the highest value of the log-likelihood function and smallest values of the AIC, AICC and BIC statistics. From the values of these statistics, we can conclude that the BG model is better than the other distributions to fit these data. The plots of the densities (together with the data histogram)  and cumulative distribution functions (with empirical distribution function) are given in Figure \ref{plot.EX}. It is evident that the BG model provides a better fit than the other models. In particular, the histogram of data shows that the BG model provides an excellent fit to these data.

For this data set, we perform the  Likelihood ratio test (LRT)  for testing the following hypotheses:
\begin{itemize}
  \item[1.] $H_0$: E distribution vs. $H_1$: BG distribution
  \item[2.] $H_0$: GE distribution vs. $H_1$: BG distribution
  \item[3.] $H_0$: BE distribution vs. $H_1$: BG distribution
  \item[4.] $H_0$: G distribution vs. $H_1$: BG distribution, or equivalently $H_0$: $(\alpha,\beta)=(1,1)$ vs. $H_1$: $(\alpha,\beta)\neq(1,1)$
  \item[5.] $H_0$: GG distribution vs. $H_1$: BG distribution, or equivalently $H_0$: $\beta=1$ vs. $H_1$: $\beta\neq 1$.
\end{itemize}
The values of LRT statistic and its corresponding p-value for each hypotheses are given in Table \ref{table.EX}. From these results, we can conclude that the null hypotheses are rejected in all situations, and therefore, the BG distribution is an adequate model.

\begin{remark}
\citeasnoun{el-al-al-13} found the following estimations for the parameters of GG distribution:
$$\hat{a}=0.421, \ \ \hat{\theta}=0.00143, \ \  \hat{\gamma}=0.044.$$
 Based on these estimations, the log-likelihood function is equal to $-224.1274$. But we found the following estimations for the parameters of GG distribution:
$$\hat{a}=0.2625, \ \ \hat{\theta}=0.0001, \ \  \hat{\gamma}=0.0828.$$
 Based on these estimations, the log-likelihood function is equal to $-222.2441$. Therefore, the estimations  of \citeasnoun{el-al-al-13} for GG distribution is not the MLE.
\end{remark}

\begin{table}
\begin{center}
\caption{Parameter estimates (with std.), K-S statistic,
\textit{p}-value for K-S, AIC, AICC, BIC, LRT statistic and \textit{p}-value of LRT    for the data set.}\label{table.EX}
{\small
\begin{tabular}{|cl|cccccc|} \hline
&Distribution     & E       & GE      & BE       & G       & GG         & BG      \\ \hline
&$\hat{\alpha}$   & ---     & 0.9021  & 0.5236   & ---     & 0.2625     & 0.2158   \\
&(std.)           & ---     &(0.1349) &(0.1714)  & ---     &(0.0395)    &(0.0392) \\\hline
&$\hat{\beta }$   & ---     & ---     &0.0847    & ---     & ---        &0.2467   \\
&(std.)           & ---     & ---     &(0.0828)  & ---     & ---        &(0.0448)  \\\hline
&$\hat{\theta }$  & 0.0219   & 0.0212   & 0.2352    & 0.0097   & 0.0001      & 0.0003   \\
&(std.)           & (0.0031) &(0.0036)  & (0.2111)  & (0.0029) & (0.0001)    & (0.0001) \\\hline
&$\hat{\gamma}$   & ---     & ---     & ---      & 0.0203   & 0.0828      & 0.0882   \\
& (std.)          & ---     & ---     & ---      & (0.0058) & (0.0031)    &(0.0030)  \\ \hline
&$-\log(L)$             & 241.0896       & 240.3855       & 238.1201      & 235.3308      & 222.2441         & 220.6714   \\
&K-S                    & 0.1911         & 0.1940         & 0.1902        & 0.1696        & 0.1409           & 0.1322     \\
&p-value (K-S)               & 0.0519         & 0.0514         & 0.0538        & 0.1123        & 0.2739           & 0.3456     \\
&AIC                    & 484.1792       & 484.7710       & 482.2400      & 474.6617      & 450.4881         & 449.3437   \\
&AICC                   & 484.2625       & 485.0264       & 482.7617      & 475.1834      & 451.0099         & 450.2326   \\
&BIC                    & 486.0912       & 488.5951       & 487.9760      & 482.3977      & 456.2242         & 456.9918 \\
&LRT                    & 40.8355        & 39.4273        & 34.8962       & 29.3179       & 3.1444           &--- \\
&p-value (LRT)          & 0.0000         & 0.0000         & 0.0000        & 0.0001        &  0.0762          &--- \\
 \hline
\end{tabular}
}
\end{center}
\end{table}

\begin{figure}
\vspace{-0.0cm}
\centering
\includegraphics[scale=0.42]{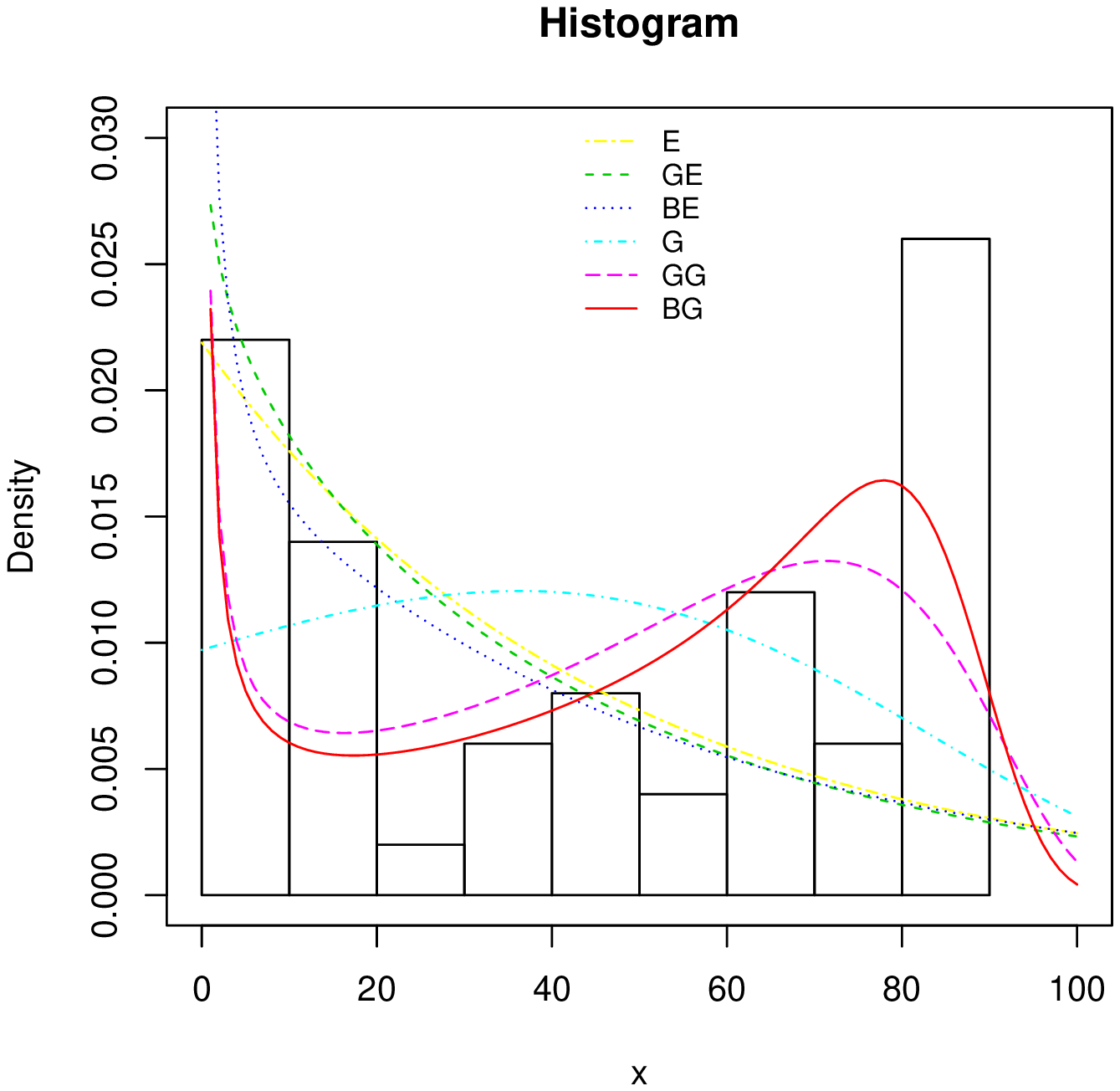}
\includegraphics[scale=0.42]{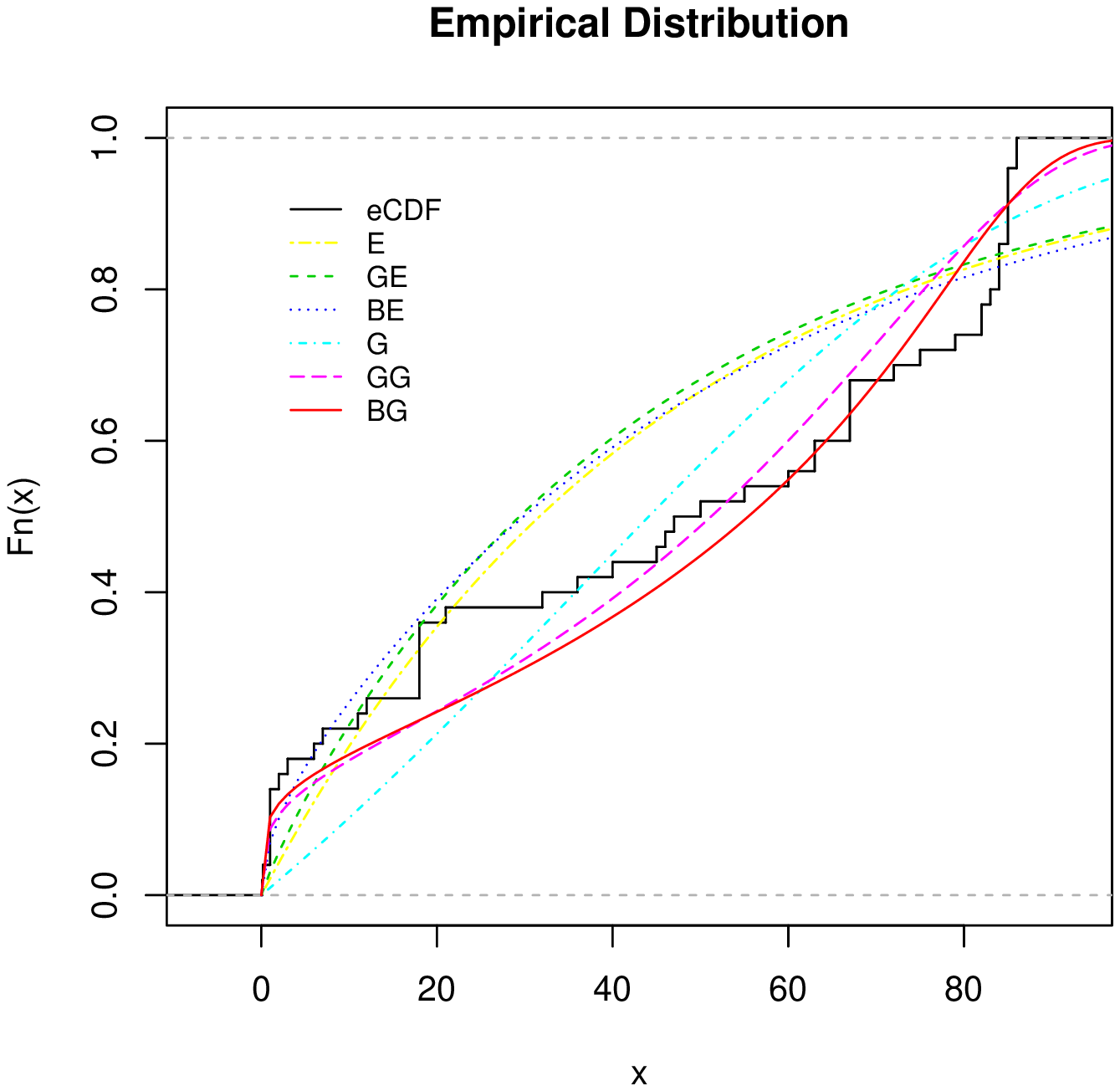}
\caption[]{Plots (density and distribution) of  fitted  E, GE, BE, G,  GG and BG distributions for the data set.}\label{plot.EX}
\end{figure}

\section*{Acknowledgements}

The authors would like to thank the  editor and the anonymous referees for their constructive comments and suggestions that appreciably improved the quality of presentation of this manuscript.

\newpage



\end{document}